\documentclass[11pt,a4paper]{amsart}

\usepackage{amsmath,amsfonts,amssymb,amsthm}
\usepackage{mathtools}
\usepackage{epsfig}
\usepackage{graphicx}
\usepackage{url}
\usepackage[usenames,dvipsnames]{xcolor}
\usepackage[colorlinks=true,linkcolor=Blue,citecolor=Green,pdfpagelabels]{
hyperref}
\usepackage{cleveref}
\usepackage{nicefrac}
\usepackage{aliascnt} 
\usepackage[alwaysadjust]{paralist}
\usepackage[T1]{fontenc}
\usepackage[utf8]{inputenc}
\usepackage{tikz,xifthen}
\usepackage{algorithm,algorithmic,eqparbox}


\floatname{algorithm}{} 


 
\newtheorem{theorem}{Theorem}[section]
\newaliascnt{lemma}{theorem}
\newaliascnt{corollary}{theorem}
\newaliascnt{definition}{theorem}
\newaliascnt{remark}{theorem}
\newaliascnt{proposition}{theorem}
\newaliascnt{conjecture}{theorem}
\newaliascnt{example}{theorem}
\newaliascnt{problem}{theorem}

\newtheorem*{theorem*}{Theorem}

\newtheorem{lemma}[lemma]{Lemma}
\aliascntresetthe{lemma}

\newtheorem*{lemma*}{Lemma}

\aliascntresetthe{corollary}

\newtheorem*{corollary*}{Corollary}

\aliascntresetthe{definition}

\newtheorem*{definition*}{Definition}

\newtheorem{remark}[remark]{Remark}
\aliascntresetthe{remark}

\newtheorem*{remark*}{Remark}

\newtheorem{proposition}[proposition]{Proposition}
\aliascntresetthe{proposition}

\newtheorem*{proposition*}{Proposition}

\newtheorem{conjecture}[conjecture]{Conjecture}
\aliascntresetthe{conjecture}

\newtheorem*{conjecture*}{Conjecture}

\aliascntresetthe{example}

\newtheorem*{example*}{Example}

\aliascntresetthe{problem}

\newtheorem*{problem*}{Problem}

\DeclareMathOperator{\inter}{int}
\DeclareMathOperator{\relint}{relint}

\DeclareMathOperator{\conv}{conv}

\def\R{\mathbb{R}}

\def\Z{\mathbb{Z}}

\def\N{\mathbb{N}}
\def\Q{\mathbb{Q}}
\def\cA{\mathcal{A}}
\def\cB{\mathcal{B}}
\def\cF{\mathcal{F}}
\def\O{\mathcal{O}}
\DeclareMathOperator{\vol}{vol}

\DeclareMathOperator{\flt}{Flt}
\DeclareMathOperator{\ML}{ML}
\DeclareMathOperator{\NP}{NP}
\DeclareMathOperator{\coNP}{coNP}

\newcommand{\zero}{\mathbf{0}}
\newcommand{\covradname}[1]{\ensuremath{\mathtt{CoveringRadius}(#1)}}

\usepackage[colorinlistoftodos,textwidth=2.5cm,textsize=small]{todonotes}

\newcommand{\iim}{i\in[m]}
\newcommand{\Np}{L_P}
\newcommand{\Npbar}{\bar L_P}
\newcommand{\Ren}{\R^n}
\newcommand{\Zen}{\Z^n}
\newcommand{\cube}{[0,1]^n}
\newcommand{\normp}{\|P\|_{\infty}}

\numberwithin{equation}{section}



\newcommand{\La}{\ensuremath{\Lambda}}
\newcommand{\Ga}{\ensuremath{\Gamma}}

\newcommand{\la}{\ensuremath{\lambda}}

\newcommand{\set}[1]{{\left\{{#1}\right\}}}
\newcommand{\eps}{\ensuremath{\varepsilon}}

\begin{document}

\title[Computing the covering radius of a polytope]{Computing the covering radius of a polytope with an application to lonely runners}


\author[J. Cslovjecsek]{Jana Cslovjecsek}
\address{Institute for Mathematics\\
         \'{E}cole Polytechnique F\'{e}d\'{e}rale de Lausanne\\
         Lausanne\\
         Switzerland}
\email{jana.cslovjecsek@epfl.ch}

\author[R. Malikiosis]{Romanos Diogenes Malikiosis}
\address{Department of Mathematics\\
		 Aristotle University of Thessaloniki\\
         Thessaloniki\\
         Greece}
\email{rwmanos@gmail.com}

\author[M. Nasz\'{o}di]{M\'{a}rton Nasz\'{o}di}
\address{Alfr\'ed R\'enyi Institute of Mathematics\\
MTA-ELTE Lend\"ulet Combinatorial Geometry Research Group\\
Dept. of Geometry, Lor\'and E\"otv\"os University\\
         Budapest\\
         Hungary}
\email{marton.naszodi@math.elte.hu}

\author[M. Schymura]{Matthias Schymura}
\address{Institut f\"ur Mathematik\\
  BTU Cottbus-Senftenberg\\
  Platz der Deutschen Einheit 1\\
  D-03046 Cottbus\\
  Germany}
\email{schymura@b-tu.de}

\thanks{JC and MS were supported by the Swiss National Science Foundation (SNSF) within the project \emph{Lattice Algorithms and Integer Programming (Nr.~185030)}.
MN was supported by the National Research, Development and
Innovation Fund (NRDI) grants K119670 and KKP-133864 as well as the Bolyai Scholarship of the Hungarian Academy of Sciences and the TKP2020-NKA-06 program provided by the NRDI}



\begin{abstract}
We study the computational problem of determining the covering radius of a rational polytope.
This parameter is defined as the minimal dilation factor that is needed for the lattice translates of the correspondingly dilated polytope to cover the whole space.
As our main result, we describe a new algorithm for this problem, which is simpler, more efficient and easier to implement than the only prior algorithm of Kannan (1992).

Motivated by a variant of the famous Lonely Runner Conjecture, we use its geometric interpretation in terms of covering radii of zonotopes, and apply our algorithm to prove the first open case of three runners with individual starting points.
\end{abstract}

\maketitle

\section{Introduction}

Let $K$ be a \emph{convex body}, that is, a compact convex subset of the $n$-dimensional Euclidean vector space~$\R^n$, and let $\Lambda \subseteq \R^n$ be a full-dimensional \emph{lattice}, that is, a discrete subgroup therein.
The \emph{covering radius} $\mu(K,\Lambda)$ of~$K$ with respect to~$\Lambda$ is the smallest non-negative real number~$\mu$ such that the lattice arrangement $\mu K+\Lambda = \bigcup_{z \in \Lambda} (\mu K + z)$ of~$\mu K$ is a covering of~$\Ren$, that is, $\mu K+\Lambda=\Ren$.
Equivalently, $\mu(K,\Lambda)$ is the maximal~$\mu >0$ such that $\mu K$ can be translated to a \emph{lattice-free} position, meaning a position in which the body contains no point of the lattice~$\Lambda$ in its interior (see~\cite[Sect.~13.1]{gruberlekkerkerker1987geometry} for details).
If $\Lambda=\Z^n$ is the standard lattice then we write $\mu(K)=\mu(K,\Z^n)$ for brevity.

The covering radius is a classical and much-studied parameter in the Geometry of Numbers, in particular in the realm of transference results, the reduction of quadratic forms, and Diophantine Approximations (cf.~\cite{gruberlekkerkerker1987geometry} for background).
The study of this geometric concept was revived with Lenstra's landmark paper~\cite{lenstra1983integer} on solving Linear Integer Programming in fixed dimension in polynomial time.
Lenstra's ideas were based on the famous \emph{flatness theorem}, which quantifies the intuition that lattice-free convex bodies are flat in some direction.
Stronger bounds in the flatness theorem with new applications in Number Theory were developed by Kannan \& Lov\'{a}sz~\cite{kannanlovasz1988covering} soon after.
More recent applications of the covering radius include (a) the classification of lattice polytopes, in particular lattice simplices, in small dimensions (cf.~Iglesias-Vali\~{n}o \& Santos~\cite{iglesiassantos2019classification} and the references therein), (b) distances between optimal solutions of mixed-integer programs and their linear relaxations (Paat, Weismantel \& Weltge~\cite{paatweismantelweltge2020distances}), (c) unique-lifting properties of maximal lattice-free polyhedra (Averkov \& Basu~\cite{averkovbasu2015lifting}), and (d) another viewpoint on the famous Lonely Runner Problem (cf.~\cite{henzemalikiosis} and Section~\ref{sect:zonotopal-lrc} in the paper at hand).

Despite these versatile applications, the question of how to actually compute the covering radius of a given convex body has not received much attention in the literature.
This is probably due to the immense computational hardness of the problem:
Kannan~\cite{kannan1992latticetranslates} reduced the classical Frobenius coin exchange problem, which is known to be $\NP$-hard, to computing the covering radius of certain simplices.
In the same paper, Kannan described an algorithm for the covering radius of a rational polytope, which however has a time complexity that involves a double exponentiation of the input size (see \Cref{thm:alg-kannan} for a precise statement).
Moreover, Haviv \& Regev~\cite{havivregev2012hardness} showed that it is even $\Pi_2$-hard to approximate the covering radius of linear images of the unit ball of the $p$-norm within a constant factor and for large enough values of $p \leq \infty$.
Similarly to $\coNP$ being the complement of~$\NP$, the class~$\Pi_2$ is the complement of the class of decision problems that can be solved in polynomial time by a non-deterministic algorithm that, additionally, has access to an $\NP$-oracle.
It holds that $\NP \subseteq \Pi_2$ and $\coNP \subseteq \Pi_2$, and no $\Pi_2$-hard problem belongs to~$\NP$, unless the polynomial time hierarchy collapses.
Micciancio~\cite{micciancio2004almostperfect} (cf.~Micciancio \& Goldwasser~\cite{miccianciogoldwasser2002complexity}) used hardness results of (variants of) the covering radius problem in the context of designing secure cryptosystems in lattice-based cryptography.

The main objective in this paper is to consider Kannan's work and devise a simpler and at the same time more efficient algorithm for computing the covering radius of a given rational polytope.
As our main result we obtain the following (see \Cref{thm:algfast} for the precise statement):

\begin{theorem*}
\label{thm:main-intro}
Let $P \subseteq \R^n$ be a rational polytope with $m$ facets and input size bounded by~$\Delta$.
Then, there is an algorithm that computes the covering radius~$\mu(P)$ of $P$ in time
\begin{equation*}
 \O\left((\Delta \cdot n)^{2n^2(n+2)} \cdot m^{n+2}\right).
\end{equation*}
\end{theorem*}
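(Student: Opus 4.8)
The plan is to reduce the computation of $\mu(P)$ to a finite enumeration over candidate configurations, and to bound the complexity of that enumeration using bounds on the encoding size of the relevant rational data. The starting point is the characterization of the covering radius as the largest $\mu>0$ for which $\mu P$ admits a lattice-free translate: equivalently, $\mu(P)^{-1}$ is the smallest $t>0$ such that every translate of $tP$ contains a lattice point. One first reformulates this as an optimization problem over translation vectors $x\in\R^n$ (which may be taken modulo $\Z^n$, so $x\in[0,1)^n$) and dilation factors $\mu$, where the constraint ``$\mu P+x$ contains no lattice point in its interior'' is a disjunction over the lattice points $z\in\Z^n$ that could possibly lie in $\mu P+x$. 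The key geometric observation — already implicit in Kannan's approach — is that at an optimal (critical) configuration, the dilated translated polytope $\mu P + x$ touches several lattice points on its boundary, and the number and combinatorial type of these contacts can be controlled: a critical translate is ``locally rigid'', pinned down by at most $n+1$ incidences of lattice points with facets of $\mu P+x$, together with the active constraint fixing $\mu$. This is the step that turns a continuous problem into a combinatorial one.

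Next I would set up the enumeration. For each choice of a bounded-size collection of lattice points $z_1,\dots,z_k$ (with $k\le n+1$) and an assignment of each $z_i$ to a facet of $P$ (there are $m$ facets, giving at most $m^{n+1}$ such assignments), the conditions ``$z_i$ lies on the $j_i$-th facet of $\mu P+x$'' form a system of linear equations in the unknowns $(x,\mu)$. Solving this system — generically a unique solution, otherwise one works on the lower-dimensional flat of solutions and iterates — yields finitely many candidate values of $\mu$, and $\mu(P)$ is the maximum among those candidates that actually correspond to a lattice-free translate (a condition one checks, or which is automatically enforced by the construction). The lattice points $z_i$ that can be involved range over a box whose size is polynomially bounded in $\|P\|_\infty$ and hence in $\Delta$; this gives roughly $\Delta^{O(n)}$ choices per point, so $\Delta^{O(n^2)}$ choices of the tuple, times $m^{n+1}$ facet assignments, times the cost of solving and checking each linear system. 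Tracking the bit-sizes of the rational solutions through Cramer's rule, together with the flatness theorem to bound how large $\mu(P)$ itself can be (ensuring the lattice-point search box is as claimed), produces the stated running time $\O\bigl((\Delta\cdot n)^{2n^2(n+2)}\cdot m^{n+2}\bigr)$.

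I expect the main obstacle to be the rigidity/finiteness step: proving that it suffices to enumerate configurations with only $O(n)$ lattice-point--facet incidences, and that every such configuration is determined by a linear system of controlled size. One has to argue carefully that at a critical translate one cannot ``do better'' by a local perturbation, which requires a dimension count showing that if fewer than $n+1$ incidences are active (or they are in degenerate position) then the translate can be moved to increase $\mu$ slightly, contradicting criticality — and that this local argument is valid up to the boundary of the parameter domain $[0,1)^n$. A secondary technical point is the bookkeeping of input sizes: one must verify that the candidate values of $\mu$ produced by the linear systems have polynomially bounded encoding length, so that the arithmetic in each of the $\Delta^{O(n^2)}\cdot m^{n+1}$ steps costs only a polynomial factor, which is absorbed into the bound. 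The rest — comparing candidates and outputting the maximum feasible one — is routine.
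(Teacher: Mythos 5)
Your proposal follows essentially the same route as the paper: the paper's ``last-covered point'' lemma (Lemma~\ref{lem:lastcovered}) is precisely the rigidity/finiteness step you identify, proving that a critical configuration is pinned down by $n+1$ facet--lattice-point incidences whose normals affinely span $\R^n$, after which the algorithm enumerates $(n+1)$-tuples of lattice points in a bounding box and $(n+1)$-tuples of facets, solves the resulting linear system, and takes the maximum over feasible solutions. Your phrasing in terms of a critical lattice-free translate $\mu P + x$ is the dual of the paper's point-centric view, and your complexity accounting is slightly loose on the exponent (the lattice-point box contributes $\Delta^{\Theta(n^2)}$ per point, not $\Delta^{O(n)}$, yielding the $2n^2(n+2)$ in the statement), but the argument is the same.
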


The devised algorithm is based on a description of the covering radius in terms of certain \emph{last-covered} points, which are points that are not contained in the interior of any lattice translate of~$\mu(P)P$ (see \Cref{lem:lastcovered}).
Together with the periodicity of the lattice and the boundedness of the polytope, this reduces the task to solving finitely many systems of linear equations and inequalities.

Our original motivation to study the computability of the covering radius was drawn from an application to the famous \emph{Lonely Runner Conjecture}.
Originally stated by Wills~\cite{willslonelyrunner} in the 1960's as a problem in Diophantine Approximation, it is probably best known via Goddyn's interpretation:
Consider~$d$ runners that run around a circular track of length~$1$ with pairwise distinct constant velocities.
The claim is that there is a time at which every runner has a distance of at least $1/(d+1)$ from the common starting point.

This is a notoriously difficult problem that received renewed attention in the literature after Tao~\cite{tao2018someremarks} made the first significant progress after many years and improved the known bounds on the guaranteed distance that the runners can achieve simultaneously.
In whole generality the conjecture is proven for up to $d\leq6$ runners (cf.~\cite{barajasserra2009} and the references therein).
In recent years people started wondering whether the problem is posed in the most natural way.
For example, in~\cite{beckhostenschymura2018lrpolyhedra} it is proposed that the restriction that the runners all start at the same place might very well be superficial.
Relaxing this condition and assuming each runner to start at an individual position leads to what we call the \emph{Shifted} Lonely Runner Conjecture (see \Cref{conj:sLRC} for details).
Another stronger formulation that could be well-suited for inductive approaches is due to Kravitz~\cite{kravitz2019barely} and called the \emph{Loneliness Spectrum Conjecture}.

We settle the first open case of the Shifted Lonely Runner Conjecture, that is, the case of three runners.
The precise statement of the following theorem, including a characterization of the extremal triples of velocities, is given as \Cref{thm:sLRC-dim3} in Section~\ref{sect:geometric-reduction-three-runners}.

\begin{theorem*}
\label{rhm:sLRC-dim3-intro}
Consider three runners with pairwise distinct constant velocities, who start running on a circular track of length $1$, with not necessarily identical starting positions.
A stationary spectator watches the runners from a fixed position along the track.
Then, there exists a time at which all the runners have distance at least $1/4$ from the spectator.
\end{theorem*}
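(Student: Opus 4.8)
The plan is to pass from the statement about runners to a covering-radius question about a zonotope, and then to run the algorithm of the main theorem. First I would recall the geometric dictionary: a configuration of three runners with velocities $v_1,v_2,v_3$ and starting points corresponds, after quotienting by the diagonal and normalizing, to a point moving on a line in the torus $\R^2/\Z^2$, and the statement ``at some time all runners are $\geq 1/4$ from the spectator'' translates into the assertion that a suitable line meets every lattice translate of the scaled cube $[-\nicefrac14,\nicefrac14]^2$ — equivalently, that the covering radius of the relevant zonotope $Z = Z(v)$ (the projection of the cube $[0,1]^3$ along the velocity vector, or more precisely the dual description used in \cite{henzemalikiosis,beckhostenschymura2018lrpolyhedra}) satisfies $\mu(Z) \leq 4$. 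I would state this reduction cleanly as a lemma, citing the zonotopal interpretation set up in Section~\ref{sect:zonotopal-lrc}, so that the Shifted Lonely Runner Conjecture for $d=3$ becomes exactly the inequality $\mu(Z(v)) \leq 4$ for every admissible integer velocity vector $v=(v_1,v_2,v_3)$ with distinct entries.

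Next I would reduce the a priori infinite family of velocity vectors to a finite, explicitly bounded one. By homogeneity and symmetry one may normalize $v$ (e.g.\ $\gcd$ one, $0 < v_1 < v_2 < v_3$, and reduce modulo the relevant equivalences), and then a standard argument — the same flatness/boundedness reasoning that underlies \Cref{lem:lastcovered} and the finiteness in the main algorithm — shows that if $\mu(Z(v)) > 4$ fails anywhere, it fails for some $v$ with entries bounded by an explicit constant. Concretely, large velocities make the zonotope ``long and thin'' in controlled directions and one can bound $\mu$ from above by other means (for instance via the classical three-runner results of \cite{barajasserra2009} applied to sub-configurations, or via a direct volume/width estimate), leaving only finitely many residual cases. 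This step is where I expect the main obstacle to lie: getting the bound on the velocities small enough that the subsequent computation is actually feasible, while keeping the argument rigorous rather than heuristic.

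Finally, for each of the finitely many remaining velocity vectors $v$, I would apply the algorithm of \Cref{thm:algfast} to the rational polytope (zonotope) $Z(v)$ and verify $\mu(Z(v)) \leq 4$; since there are finitely many cases and each run terminates, this is a finite certified computation. To obtain the sharper statement of \Cref{thm:sLRC-dim3} — the characterization of extremal triples — I would additionally track, for the vectors achieving $\mu(Z(v)) = 4$, the last-covered points produced by \Cref{lem:lastcovered}: these encode precisely the worst-case times and starting configurations, and reading them off gives the claimed list of extremal velocity triples. The write-up would therefore be: (i) the reduction lemma to $\mu(Z(v)) \le 4$; (ii) the finiteness lemma bounding $\|v\|$; (iii) the computational verification and extraction of extremizers, with the output of the algorithm serving as the certificate.
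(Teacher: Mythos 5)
Your overall plan---pass to a covering-radius statement about a planar zonotope, reduce to finitely many velocity triples, and certify the survivors by computation---is the right shape, and it matches the paper's strategy. But two things go wrong, one a concrete error and one a genuine gap that you yourself flag.

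First, the target inequality is wrong. The zonotopal reformulation in the paper (and in~\cite{henzemalikiosis,beckhostenschymura2018lrpolyhedra}) turns \Cref{conj:sLRC} for $d$ runners into $\mu(Z_v) \leq \frac{d-1}{d+1}$, which for $d=3$ reads $\mu(Z_v) \leq \frac12$. Your proposed condition $\mu(Z)\leq 4$ does not match any normalization consistent with the definition of the covering radius used here; the quantity $4$ seems to have leaked in from the $\frac14$ distance threshold. Since the rest of your argument hinges on bounding $\mu$, getting the target wrong by a factor of $8$ is not cosmetic.

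Second, and more seriously, step (ii) of your plan---the reduction to finitely many velocity vectors with an explicit bound---is the mathematical heart of the proof, and you explicitly say you do not know how to do it. The paper does not obtain it from ``flatness/boundedness reasoning underlying \Cref{lem:lastcovered}'' or from classical three-runner theorems; it needs a chain of sharp planar inequalities. Concretely: (a) a reduction to pairwise coprime speeds (\Cref{prop:pairwise-coprime-d3}); (b) the observation that $Z_v$ is a hexagon with at least one interior lattice point, whence $w(Z_v)\geq 2$, and then a separate argument (\Cref{lem:no-lattice-width-two-zonotopes}) ruling out $w(Z_v)=2$ because it would force two of the three parallelogram areas, i.e.~two of the $v_i$, to coincide; (c) the planar flatness theorem for centrally symmetric bodies with the sharp constant $2$, combined with strictness for non-parallelograms, to dispose of $w(Z_v)\geq 4$; and (d) for the remaining case $w(Z_v)=3$, the Averkov--Wagner inequality $\vol(Z_v)\leq w(Z_v)^2/(2\mu(Z_v)w(Z_v)-2)$, which together with $\vol(Z_v)=v_1+v_2+v_3$ yields $\mu(Z_v)<\frac12$ as soon as $v_1+v_2+v_3\geq 10$. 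This leaves exactly the four triples $(1,2,3),(1,2,5),(1,3,4),(1,3,5)$, which are then checked (by the algorithm or by hand). Without this chain, there is no finite list, and hence no proof. Your remarks about reading off extremal triples from last-covered points are a reasonable idea for the characterization, but they presuppose the finiteness reduction that is missing.
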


The interpretation of the Lonely Runner Problem via covering radii of certain zonotopes was established in~\cite{henzemalikiosis}.
Based on this geometric interpretation, the main idea of the proof of the above result is to reduce the problem to only a small list of concrete triples of velocities and then either proceed by hand or use the developed algorithm to compute the exact covering radius of the particular instances.

\subsection*{Organization of the paper}

For the reader's convenience we first give a short review of Kannan's original approach to compute the covering radius of a rational polytope.
Afterwards in Section~\ref{sect:novel-algorithm}, we describe the details of our new algorithm and derive a bound on its time complexity that beats Kannan's complexity estimation (in Theorem~\ref{thm:alg-kannan}) for every reasonable input.
Finally in Section~\ref{sect:lonely-runners}, we explain the relationship between the Lonely Runner Conjecture and the computation of the covering radius of certain zonotopes, and apply our algorithm to prove the Shifted Lonely Runner Conjecture for three runners.

\section{A short review of Kannan's approach}

Throughout, we let $A \in \R^{m \times n}$ and $b \in \R^m$ be the defining data of a full-dimensional polytope $P = \{x \in \R^n : A x \leq b\}$.
The rows of $A$ are denoted by $a_1,\ldots,a_m$ and the entries of $b$ by $b_1,\ldots,b_m$.

Kannan's algorithm for the computation of $\mu(P)$ is based on the fact that, if both $A$ and $b$ are rational, then $\mu(P)$ is a rational number whose numerator and denominator are polynomially bounded by the size of the input (see~\cite[Prop.~(5.1)]{kannan1992latticetranslates}).
This allows to use a binary search if one is able to decide whether the arrangement $\mu P + \Z^n$ covers the whole space~$\R^n$, for any fixed $\mu \in \Q_{\geq0}$.
Kannan's approach to this decision problem is to derive a polyhedral description of the arrangement $\mu P+\Z^n$ in terms of finite data.
In his main structural result, \cite[Thm.~(4.1)]{kannan1992latticetranslates}, he shows that there is a partition of~$\R^n$ into polyhedral $\Z^n$-periodic sets $S_1,\ldots,S_r$, meaning that $S_i + \Z^n = S_i$, such that for each $S_i$ there is a family $\cB_i$ of bases of~$\Z^n$, and for each basis $B \in \cB_i$ there is a finite subset $Z_B \subseteq \Z^n$ and an affine transformation $T_B:\R^m\to\R^n$ such that, for each $1 \leq i \leq r$,
\begin{align}
(\mu P+\Z^n) \cap S_i &= \left(\Bigg(\bigcup_{B \in \cB_i}(\mu P+Z_B)\cap(T_B\cdot b + F_B)\Bigg) + \Z^n\right) \cap S_i,\label{eq:kannan-description}
\end{align}
where $F_B = B [0,1)^n$ denotes the fundamental cell of~$\Z^n$ associated with the basis~$B$.
Remember that $b$ is the right hand side of the inequality system that defines~$P$.
Moreover, all the involved objects $S_i,\cB_i,Z_B,T_B,F_B$ can be explicitly computed and are bounded in size only by a function in the size of the input.
Note that in the very special situation that $\mu P$ is contained in a fundamental cell $F_B$ of~$\Z^n$, this description reduces to $\mu P+\Z^n = (\mu P \cap F_B) + \Z^n$, which has been the starting point of Kannan's investigation.

With the description~\eqref{eq:kannan-description} at hand, he now formulates a family of mixed-integer linear programs of bounded size that model the question whether, for a given $1 \leq i \leq r$, there exists a point $x \in S_i$ such that $x \notin \mu P+\Z^n$.
In fact, this decision boils down to checking whether, for every basis $B \in \cB_i$ and every lattice point $z \in Z_B$, the unique lattice translate $x_B$ of a given point $x \in S_i$, that is contained in the parallelepiped $F_B + T_B \cdot b$, is contained in the translate $\mu P + z$.
Employing algorithms for checking feasibility of mixed-integer linear programs then facilitates the binary search for computing~$\mu(P)$ as described above.

\begin{theorem}[{Kannan~\cite{kannan1992latticetranslates}}]
\label{thm:alg-kannan}
Let $P = \{x \in \R^n : a_i^\intercal x \leq b_i, \iim\}$ be a rational polytope, with $a_i \in \Zen$ and $b_i \in \Z_{>0}$, for all $\iim$.
Then, there is an algorithm that computes the covering radius of $P$ in time
\begin{equation*}
 (n m \log \normp)^{n^{\O(n)}},
\end{equation*}
where $\normp$ is the maximal absolute value of an entry of the data~$a_i$ and~$b_i$.
\end{theorem}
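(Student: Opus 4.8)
The strategy is the one outlined in the review above: reduce the exact computation of $\mu(P)$ to polynomially many calls of a covering-decision oracle, and implement that oracle via the polyhedral description \eqref{eq:kannan-description} together with integer programming in fixed dimension. For the first step I would invoke \cite[Prop.~(5.1)]{kannan1992latticetranslates} to write $\mu(P) = p/q$ with $\log(pq)$ bounded polynomially in $n$, $m$ and $\log\normp$. Since $\mu P + \Zen = \Ren$ holds precisely when $\mu \geq \mu(P)$ --- which is immediate from the definition of the covering radius --- a binary search over the polynomially many admissible values $p/q$ pins down $\mu(P)$ exactly, provided one can decide, for a fixed rational $\mu$ of controlled bit-size, whether $\mu P + \Zen = \Ren$; only polynomially many oracle calls are required.

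For the decision oracle at a fixed $\mu$, I would compute the data $S_1,\dots,S_r$, the basis families $\cB_i$, the finite sets $Z_B \subseteq \Zen$, and the affine maps $T_B$ together with the cells $F_B$ of \cite[Thm.~(4.1)]{kannan1992latticetranslates}, so that \eqref{eq:kannan-description} holds. By that identity, $\mu P + \Zen$ fails to cover $\Ren$ if and only if there is an index $i$ and a point $x \in S_i$ whose unique $\Zen$-translate lying in the parallelepiped $F_B + T_B \cdot b$ avoids every translate $\mu P + z$ with $z \in Z_B$, and this simultaneously for all $B \in \cB_i$. Introducing the integer translation vectors as auxiliary variables, the membership $x \in S_i$ and the containment of the translates in the parallelepipeds are systems of linear inequalities, while each non-containment in a translate $\mu P + z$ is a disjunction of linear inequalities; hence the existence of such an $x$ amounts to the feasibility of a bounded family of mixed-integer linear programs in $\O(n)$ integer variables. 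Deciding each of them with Lenstra's algorithm \cite{lenstra1983integer}, or with Kannan's sharpening of it --- whose running time is $n^{\O(n)}$ times a polynomial in the bit-size of the program --- answers the oracle.

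What remains is bookkeeping: bound $r$, the cardinalities $\abs{\cB_i}$ and $\abs{Z_B}$, and the bit-sizes of all coefficients occurring in the $S_i$, $T_B$ and $Z_B$ in terms of $n$, $m$ and $\log\normp$. Kannan's construction supplies explicit, if enormous, such bounds; multiplying the resulting number of integer programs by the per-program cost $n^{\O(n)}$ times a polynomial in the bit-size, and then simplifying, yields the claimed running time $(n m \log \normp)^{n^{\O(n)}}$. The genuinely hard part is none of the above steps but the structural statement \eqref{eq:kannan-description} itself, together with the assertion that all the objects in it are computable and of bounded size: one has to analyze the local shape of the arrangement $\mu P + \Zen$, isolate the $\Zen$-periodic regions on which a single finite template of translates already describes the covering, and control the lattice bases and lattice points that this template requires. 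Once \eqref{eq:kannan-description} is available, the rationality bound, the binary search and the passage to integer feasibility are all routine, and the final running time is merely the cumulative price of the many exponential factors that the description carries.
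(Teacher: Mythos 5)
Your proposal reconstructs exactly the outline the paper itself gives for Kannan's algorithm: rationality of $\mu(P)$ with polynomially bounded numerator and denominator via \cite[Prop.~(5.1)]{kannan1992latticetranslates}, binary search reducing to the covering-decision problem at a fixed rational $\mu$, the periodic polyhedral decomposition of \cite[Thm.~(4.1)]{kannan1992latticetranslates} as in \eqref{eq:kannan-description}, and a reduction to feasibility of mixed-integer linear programs solved by Lenstra/Kannan in fixed dimension. The paper presents Theorem~\ref{thm:alg-kannan} as a citation rather than proving it, and your sketch follows the same steps in the same order; the only slight imprecision is the phrase ``binary search over the polynomially many admissible values $p/q$'' --- the number of admissible fractions is exponential, but what matters (and what you correctly conclude) is that only polynomially many oracle calls are needed, since one binary-searches to precision $1/q^2$ and then rounds.
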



Although, in the proof of~\cite[Prop.~(5.1)]{kannan1992latticetranslates}, Kannan hinted at our main structural result in Lemma~\ref{lem:lastcovered} below, he did not follow that path for the computation of~$\mu(P)$.
One may speculate that the reason was that already while writing~\cite{kannan1992latticetranslates} he had an extension of his methods in mind, that he used in~\cite{kannan1990testsets} to design a decision procedure for sentences of the form
\[
\forall\, y \in \Z^p\ \exists\, x \in \Z^n : Ax + By \leq b.
\]
Also, he was interested in allowing the right hand side $b\in\R^m$ in the definition of~$P$ to vary in specified polyhedral regions.
The decomposition technique in~\eqref{eq:kannan-description} is suited for this purpose, and we refer to Kannan's own explanations in~\cite[p.~163]{kannan1992latticetranslates} for more information.

\section{Enter Geometry: A simpler and faster algorithm}
\label{sect:novel-algorithm}

Here, we present an algorithm to compute the covering radius of a rational 
polytope, that is different in spirit from the one that 
Kannan~\cite{kannan1992latticetranslates} gave, is easy to implement, and has a 
superior running time for every reasonable input; a quantitative comparison is presented at the end of this section.
Our algorithm is based on a crucial geometric observation that we discuss first.

Given a convex body $K \subseteq \R^n$, we call a point $p \in \Ren$ \emph{last-covered} by~$K$, if $p \notin \inter(\bar\mu K) + \Z^n$, where $\bar\mu = \mu(K)$.
The intuition behind this notion is that if we consider the lattice arrangements $\mu K + \Z^n$, for increasing values of $\mu \leq \bar \mu$, then~$p$ does not belong to any lattice translate of $\mu K$ unless $\mu = \bar \mu$.
This concept is very natural for the investigation of the covering radius and already appeared before, for instance in~\cite{codenottisantosschymura2019the}.
Figure~\ref{fig:last-covered-points} illustrates the concept of last-covered points on a covering by triangles and another one by squares.

\begin{figure}[th]
\hfill\includegraphics[scale=.7,page=3]{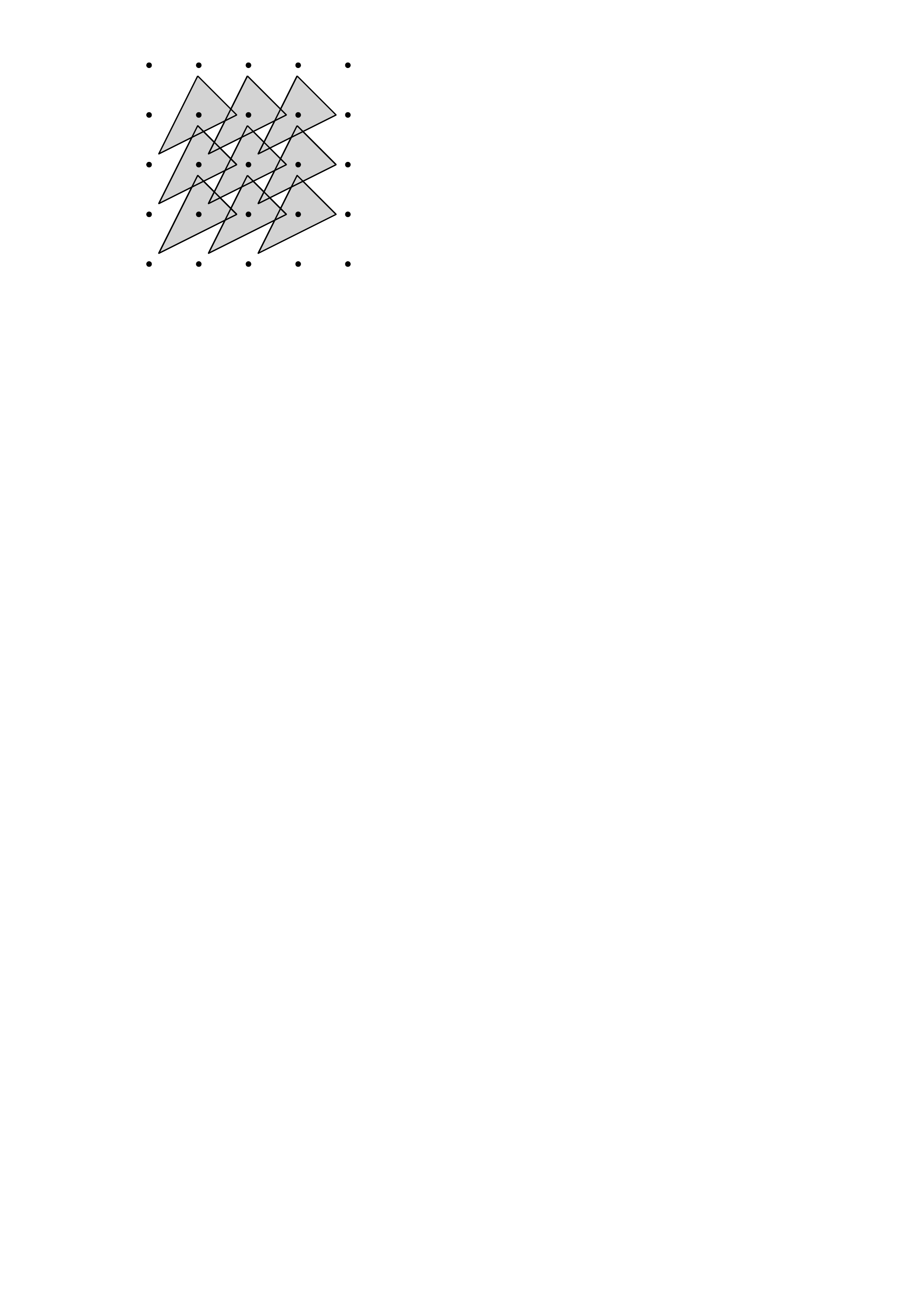}
\hfill\includegraphics[scale=.7,page=6]{last+covered.pdf}
\hfill\,
\caption{A portion of a covering of the plane by translates of a triangle (left) and a square (right). The last-covered points are illustrated in red.}
\label{fig:last-covered-points}
\end{figure}

\begin{lemma}
\label{lem:lastcovered}
Let $P = \left\{x \in \Ren: a_i^\intercal x \leq b_i, \iim\right\}$ be a facet description of the polytope $P$ with $b_i>0$, for all $i \in [m]$.\footnote{This assumption means that the origin is contained in the interior of~$P$, which is no restriction as $\mu(P+t) = \mu(P)$, for every $t \in \R^n$.}
Then, there are facet normals $a_{i_1}, \ldots, a_{i_{n+1}}$ of~$P$ 
and not necessarily distinct lattice points $z_1,\ldots,z_{n+1} \in \Z^n$ 
such that the system of linear equations
\begin{align}
\mu &= {a_{i_1}^\intercal (x - z_1)}/b_{i_1} = \ldots = 
{a_{i_{n+1}}^\intercal (x - z_{n+1})}/b_{i_{n+1}}\label{eq:lastcovered}
\end{align}
in the variables $\mu$ and $x$ has a unique solution $(\bar \mu, \bar p)$, where $\bar \mu = \mu(P)$ and~$\bar p$ is a last-covered point with respect to~$P$.
\end{lemma}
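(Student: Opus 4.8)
The idea is to start from the variational characterization of $\mu(P)$ as the largest dilation factor for which $\bar\mu P$ has a lattice-free translate, locate a ``critical'' such translate, and then count degrees of freedom to force the $(n+1)$-fold equality in~\eqref{eq:lastcovered}.

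First I would establish existence of a last-covered point. Since $\bar\mu P + \Z^n = \R^n$ but $\inter(\bar\mu P) + \Z^n \subsetneq \R^n$ (the latter being open and $\Z^n$-periodic, hence if it were all of $\R^n$ one could shrink $\bar\mu$ slightly by compactness of a fundamental domain and still cover, contradicting minimality of $\bar\mu$), there is a point $\bar p$ with $\bar p \notin \inter(\bar\mu P) + \Z^n$. By $\Z^n$-periodicity we may take $\bar p$ in a fixed bounded fundamental cell, and then, because the origin lies in $\inter P$, only finitely many translates $\bar\mu P + z$ can contain $\bar p$ — call them $z \in Z(\bar p) \subseteq \Z^n$, and this set is nonempty since the translates cover. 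For each $z \in Z(\bar p)$ the point $\bar p \in \bar\mu P + z$ but $\bar p \notin \inter(\bar\mu P + z)$, so $\bar p$ lies on the boundary, i.e. $a_i^\intercal(\bar p - z)/b_i = \bar\mu$ for at least one facet index $i = i(z)$, while $a_i^\intercal(\bar p - z)/b_i \le \bar\mu$ for all $i$.

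The heart of the argument is a dimension count showing we can choose $\bar p$ so that the active constraints at $\bar p$ pin down $(\bar\mu,\bar p)$ uniquely in $\R^{n+1}$. Consider the function $f(x) = \max_{z \in \Z^n}\ \min_{i\in[m]} \big(a_i^\intercal(x-z)/b_i\big)$; one checks $f(x) \le \mu$ exactly when $x \in \mu P + \Z^n$, so $f(x) \le \bar\mu$ everywhere and $f(\bar p) = \bar\mu$ at every last-covered point $\bar p$, and in fact $\bar\mu = \min_{x} \max_{z}\min_i (\cdots)$ is attained precisely at last-covered points (working locally, only finitely many $z$ matter near $\bar p$, so $f$ is a well-defined piecewise-linear function there). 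Thus $\bar p$ is a global minimum of the piecewise-linear convex-ish function $f$. At such a minimum of a min–max of linear functions, a standard argument (restricting to the relevant cell and using that a minimum of a PL function over $\R^n$ that is not locally constant must have enough active linear pieces to make $0$ lie in the relevant convex hull of gradients, hence the minimizing face is a single point after perturbing $\bar p$ within the optimal set to a vertex) shows that among the pairs $(z, i(z))$ that are active at $\bar p$ one can extract indices $i_1,\dots,i_{n+1}$ and lattice points $z_1,\dots,z_{n+1}$ whose $n+1$ linear equations $a_{i_k}^\intercal(x - z_k)/b_{i_k} = \mu$ in $(\mu, x) \in \R^{n+1}$ have a unique common solution, necessarily $(\bar\mu,\bar p)$. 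Concretely: if the optimal set $\{x : f(x) = \bar\mu, x \text{ last-covered near } \bar p\}$ had positive dimension we could move along it to its relative boundary, strictly increasing the number of active linear inequalities $a_i^\intercal(x-z)/b_i = \mu$ satisfied with equality at $\mu = \bar\mu$; iterating, we reach a $\bar p$ where the active set cuts out a $0$-dimensional solution in $(\mu,x)$-space, which requires at least $n+1$ affinely independent active equations.

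**Main obstacle.** The delicate point is the last step — making the dimension count rigorous. One must be careful that the relevant set of active lattice points $z$ stays finite and locally constant as one moves $\bar p$ within the optimal set (this is fine on the interior of each PL cell but needs attention at cell boundaries), and one must verify that ``$0$-dimensional solution set in $\R^{n+1}$'' genuinely yields $n+1$ equations with an affinely independent coefficient matrix $\big[\,a_{i_k}/b_{i_k}\ \big|\ {-1}\,\big]$ rather than fewer; the $+1$ coming from the extra variable $\mu$ is exactly what bumps $n$ up to $n+1$. I would handle this by phrasing it as: the common solution set of the active equations is an affine subspace containing $(\bar\mu,\bar p)$; if its dimension is positive, pick a direction $(\delta\mu,\delta x)$ inside it, note that $\bar p + t\,\delta x$ stays last-covered for $t$ in some interval with value $\bar\mu + t\,\delta\mu$, and since $f \le \bar\mu$ forces $\delta\mu \le 0$ along both $\pm t$, we get $\delta\mu = 0$ and $\bar p + t\,\delta x$ last-covered — then push $t$ until a new facet–translate pair becomes active, strictly growing the active system, and repeat. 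Termination is clear since there are at most finitely many active pairs in the bounded region, and the terminal active system has the desired uniqueness property.
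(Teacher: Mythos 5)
Your proposal is structurally the same argument as the paper's. Where the paper chooses a last-covered point $\bar p$ that maximizes the number $\varphi(\bar p)$ of facets of lattice translates of $\bar\mu P$ through it and then shows (by contradiction, splitting into the cases where the active normals lie in a hyperplane through the origin or not) that the active normals must affinely span $\R^n$, you start at an arbitrary last-covered point and iteratively perturb along the solution set of the active equations until enough constraints accumulate. These are two bookkeeping choices for the same perturbation argument, and both rely on the same two ingredients: a direction in the solution set with $\delta\mu \neq 0$ would escape the covering, while a direction with $\delta\mu = 0$ lets you push until a new facet becomes tight.

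That said, your auxiliary function $f$ is wrong. You set $f(x) = \max_z \min_i a_i^\intercal(x-z)/b_i$ and claim $f(x)\leq\mu$ iff $x\in\mu P+\Z^n$ and $\bar\mu = \min_x \max_z\min_i(\cdots)$; both the inner and outer operations are swapped. Since $x\in\mu P+z$ iff $\max_i a_i^\intercal(x-z)/b_i\leq\mu$, the correct function is $g(x)=\min_z\max_i a_i^\intercal(x-z)/b_i$, one has $g(x)\leq\mu$ iff $x\in\mu P+\Z^n$, and $\mu(P)=\max_x g(x)$ with last-covered points being the \emph{maximizers}, not the minimizers. Your $f$ satisfies only $f\leq g$, and it is not $\bar\mu$ at last-covered points, so the inequality $f\leq\bar\mu$ you invoke does not encode the needed information. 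The ``concretely'' paragraph implicitly uses $g(\bar p+t\,\delta x)=\bar\mu+t\,\delta\mu$ for small $|t|$, which is correct for $g$ (and requires the local finiteness of relevant translates you flag), but not for your $f$. Once $f$ is replaced by $g$ throughout, the proof is correct and coincides with the paper's.
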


\begin{proof}
After a suitable scaling we may assume that $b_1=\ldots=b_m=1$, and we write $\bar \mu = \mu(P)$ for brevity.
For a last-covered point $p \in \R^n \setminus \left(\inter(\bar \mu P) + \Z^n\right)$, we let
\[
\cF_p := \left\{F : \exists\,z \in \Z^n\text{ such that }F\text{ is a facet of }\bar \mu P + z\text{ that contains }p\right\}
\]
be the set of facets of lattice translates of $\bar \mu P$ that contain~$p$, and we write $\varphi(p) = \# \cF_p$ for its cardinality.
This number is clearly finite.
In fact, $\varphi(p)\leq m\cdot\max\{\#((q-\bar\mu P)\cap\Z^n) : q\in\Ren\}$, where the maximum is finite since $P$ is bounded and the lattice~$\Z^n$ is periodic.
Thus, there is a last-covered point~$\bar p$ such that $\varphi(\bar p) \geq \varphi(p)$, for every $p \in \R^n \setminus \left(\inter(\bar \mu P) + \Z^n\right)$.

Now, let $k=\varphi(\bar p)$ and $\cF_{\bar p} = \{F_1,\ldots,F_k\}$ be the family of facets of lattice translates of $\bar \mu P$ that contain $\bar p$, and for each $ 1 \leq j \leq k$, let $a_{i_j}^\intercal(x - z_j) = \bar \mu$ be the linear equation of the supporting hyperplane of~$F_j$, where the $z_1,\ldots,z_k \in \Z^n$ are suitable not necessarily distinct lattice points.
We subsume these equations as the system
\begin{align}
\mu &= {a_{i_1}^\intercal (x - z_1)} = \ldots = 
{a_{i_{k}}^\intercal (x - z_{k})}\label{eq:lastcoveredA}
\end{align}
in the variables $(\mu,x) \in \R^{n+1}$, and consider the $z_j$ and the $a_{i_j}$ to be fixed in the sequel.
By construction $(\bar \mu, \bar p)$ is a solution of~\eqref{eq:lastcoveredA}.

Let $\cA := \{a_{i_1},\ldots,a_{i_k}\}$ be the set of facet normals of~$P$ involved in the linear system~\eqref{eq:lastcoveredA}.

\emph{Claim:} The affine hull of $\cA$ equals $\Ren$.

\noindent Indeed, suppose that $\cA$ is contained in the hyperplane $\{y \in \Ren: g^\intercal y = \gamma\}$, for some 
$g\in\Ren \setminus\{\zero\}$ and $\gamma\geq 0$.
We distinguish cases according to the sign of 
$\gamma$.
If $\gamma>0$, then for every $\varepsilon>0$ and every $1 \leq j \leq k$, we have
\[
a_{i_j}^\intercal(\bar p+\varepsilon g-z_j) = \bar \mu + \varepsilon a_{i_j}^\intercal g = \bar \mu + \varepsilon \gamma > \bar \mu,
\]
and therefore the point $\bar p+\varepsilon g$ is 
not contained in any of the lattice translates of $\bar \mu P$ that contain $\bar p$.
If~$\varepsilon$ is small enough, then $\bar p+\varepsilon g$ is moreover not contained in any other lattice translate of $\bar\mu P$ either, contradicting the definition of the covering radius $\bar \mu = \mu(P)$.

If $\gamma=0$, then consider the ray $\{\bar p+tg : t \geq 0\}$.  By the compactness of the facets~$F_j$, and the fact that the set of last-covered points is closed, there is a largest $\bar t\geq 0$ for which $\{\bar p+\bar tg : 0 \leq t \leq \bar t\} \subseteq \bigcap_{j=1}^k F_j$ and $\{\bar p+\bar tg : 0 \leq t \leq \bar t\}$ consists only of last-covered points.
Observe that $\bar t > 0$ because, by the choice of~$\bar p$ and the assumption $\cA \subseteq \{y \in \R^n : g^\intercal y = 0\}$, we have $\bar p \in \relint(\bigcap_{j=1}^k F_j)$.
As $\bar p + \bar t g$ is a last-covered point and $\bar t$ is maximal, there must be an additional facet of a lattice translate of $\bar \mu P$ that contains $\bar p + \bar t g$, contradicting the maximality of~$\varphi(\bar p)$.

Thus, in either case we arrive at a contradiction and the claim is proven.

Now, we can show that the solution $(\bar \mu,\bar p)$ of the system~\eqref{eq:lastcoveredA} is unique. Indeed, by the above claim, $\eqref{eq:lastcoveredA}$ is a system of linear equations in the $n+1$ variables $(\mu,p)$, whose coefficient matrix has rank~$n+1$. The uniqueness of the solution thus follows, and the proof of Lemma~\ref{lem:lastcovered} is complete.
\end{proof}

%

In order to turn \Cref{lem:lastcovered} into an algorithm for~$\mu(P)$, we need to control the relevant lattice points $z_1,\ldots,z_{n+1}$ that determine the system~\eqref{eq:lastcovered} by the data that defines the polytope~$P$.
To this end, assume that we have an upper bound $\mu_0$ on the covering radius of $P$, that is, $\mu(P)\leq \mu_0$.
Clearly, $\mu P+\Zen$ is a covering of $\Ren$ if and only if the cube $\cube$ (which is a fundamental cell of $\Zen$) is contained in $\mu P+\Zen$. 
We let
\begin{equation}\label{eq:npdefinition}
 \Np:=\Zen\cap (\cube-\mu_0 P),
\end{equation}
which is the set of those lattice points $z \in \Z^n$ such that the translate $z + \mu_0 P$ has a non-empty intersection with~$[0,1]^n$.
Then, for any $\mu>0$ and because we assume that $\zero \in \inter(P)$, we have that 
\begin{equation*}
 \mu P+\Zen=\Ren \quad\mbox{ if and only if }\quad
 \mu P+\Np \supseteq \cube, 
\end{equation*}
or, equivalently,
\begin{equation}\label{eq:npisenoughB}
 \mu > \mu(P) \quad\mbox{ if and only if }\quad
 \inter(\mu P)+\Np \supseteq \cube. 
\end{equation}
Clearly, there is a last-covered point $p$ in $\cube$, and for this $p$, the 
lattice points $z_1,\ldots,z_{n+1}$ in \Cref{lem:lastcovered} may be chosen to belong to~$\Np$. 

In summary, we have shown the following concretization of 
\Cref{lem:lastcovered}.

\begin{proposition}
\label{prop:proofofalgorithm}
Let $P = \{x \in \R^n: a_i^\intercal x \leq b_i, \iim\}$ be a facet 
description of the polytope $P$ with $b_i>0$, for all $i \in [m]$.
Assume that $\mu(P)\leq \mu_0$ for some $\mu_0>0$, and let $\Np$ be 
defined by \eqref{eq:npdefinition}.

Then, there are facet normals $a_{i_1}, \ldots, 
a_{i_{n+1}}$ of $P$, 
and not necessarily distinct lattice points $z_1,\ldots,z_{n+1} \in \Np$ 
such that the system of linear equations \eqref{eq:lastcovered}
in the variables $\mu$ and $x$ has a unique solution $(\bar \mu, \bar p)$, and in this solution $\bar \mu = \mu(P)$ and~$\bar p$ is a last-covered point with respect to~$P$.
\end{proposition}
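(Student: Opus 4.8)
The plan is to show that the distinguished last-covered point produced in the proof of \Cref{lem:lastcovered} may be taken to lie in the cube $\cube$, and that once it does, the accompanying lattice points are forced into $\Np$ for free.

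First I would recall, exactly as at the start of the proof of \Cref{lem:lastcovered}, that after rescaling the inequalities we may assume $b_1=\ldots=b_m=1$ — this changes neither $P$ nor $\Np$ — and set $\bar\mu=\mu(P)$. The key observation is that the arrangement $\bar\mu P+\Z^n$ is invariant under translation by $\Z^n$; hence so is the set of last-covered points, and the counting function $\varphi$ from the proof of \Cref{lem:lastcovered} satisfies $\varphi(p+z)=\varphi(p)$ for all $p\in\R^n$ and $z\in\Z^n$. Consequently, among the last-covered points at which $\varphi$ is maximal there is one, say $\bar p$, lying in $[0,1)^n\subseteq\cube$: start from any maximizer $q$ and replace it by $q-z$, where $z\in\Z^n$ is the unique lattice point with $q-z\in[0,1)^n$; this translation carries the facets of lattice translates of $\bar\mu P$ through $q$ bijectively onto those through $\bar p$. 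Now I would run the remainder of the proof of \Cref{lem:lastcovered} verbatim with this particular $\bar p$. It produces facet normals $a_{i_1},\ldots,a_{i_{n+1}}$ and not necessarily distinct lattice points $z_1,\ldots,z_{n+1}\in\Z^n$ such that, for each $j$, the hyperplane $\{x:a_{i_j}^\intercal(x-z_j)=\bar\mu\}$ supports a facet of $\bar\mu P+z_j$ that contains $\bar p$, and such that the system \eqref{eq:lastcovered} has $(\bar\mu,\bar p)$ as its unique solution with $\bar p$ last-covered — this last part, including the affine-spanning argument for the facet normals and the resulting rank count, is identical to \Cref{lem:lastcovered} and need not be repeated.

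It then remains to verify $z_j\in\Np$ for every $j$. Since $\zero\in\inter(P)$ and $\bar\mu=\mu(P)\le\mu_0$, we have $\bar\mu P\subseteq\mu_0 P$. As $\bar p$ lies on a facet of $\bar\mu P+z_j$, in particular $\bar p\in\bar\mu P+z_j\subseteq\mu_0 P+z_j$, so $z_j\in\bar p-\mu_0 P\subseteq\cube-\mu_0 P$; being a lattice point, $z_j\in\Z^n\cap(\cube-\mu_0 P)=\Np$, as claimed.

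I do not expect a genuine obstacle here, as the statement is a localisation of \Cref{lem:lastcovered}; the step that needs care is the periodicity argument — one must make sure that relocating the distinguished last-covered point into $\cube$ drags the supporting facets (and hence the lattice points $z_j$) along consistently, since it is precisely the resulting containment $\bar p\in\bar\mu P+z_j$ with $\bar p\in\cube$ that yields $z_j\in\Np$. As a cross-check, one may instead argue intrinsically, replacing ``last-covered point'' by ``last-covered point in $\cube$'' throughout the proof of \Cref{lem:lastcovered} and invoking \eqref{eq:npisenoughB} to see that only the translates by $\Np$ matter for covering $\cube$; the two routes are equivalent.
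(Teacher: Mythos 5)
Your proposal is correct and follows the same route as the paper, which only sketches this step ("Clearly, there is a last-covered point $p$ in $\cube$, and for this $p$, the lattice points $z_1,\ldots,z_{n+1}$ in Lemma~\ref{lem:lastcovered} may be chosen to belong to $\Np$"). You helpfully make explicit the two things the paper leaves implicit: that the $\Z^n$-periodicity of the arrangement lets one pick a maximizer of $\varphi$ inside $\cube$ so that the proof of Lemma~\ref{lem:lastcovered} applies verbatim, and that $\bar p \in \bar\mu P + z_j \subseteq \mu_0 P + z_j$ together with $\bar p \in \cube$ then forces $z_j \in \Np$.
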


As the set $\Np$ may be difficult to compute, we consider a superset 
$\Npbar \supseteq \Np$ using a \emph{bounding box} for $P$.
For this purpose, let $\beta(P)$ be defined as
\begin{equation*}
 \beta(P):=\max\{\|x\|_{\infty}: x\in P\}=\max\{\|v\|_{\infty}: v 
 \mbox{ is a vertex of } P\},
\end{equation*}
where $\|x\|_{\infty}$ denotes the maximum norm of the vector~$x \in \R^n$.
Assume that we are provided with upper bounds $\beta(P)\leq \beta_0$ and  $\mu(P)\leq \mu_0$.
Then, we may define
\begin{equation}\label{eq:npbardefinition}
 \Npbar:=\Zen\cap (\cube-\mu_0 [-\beta_0,\beta_0]^n)
\end{equation}
and by the definition of~$\Np$ we readily get $\Npbar \supseteq \Np$.

After these preparations, we are ready to describe our algorithm for the computation of the covering radius~$\mu(P)$ of the polytope $P$.
As input, it takes the facet description of $P$, upper bounds $\beta_0,\mu_0$ as above, and it defines $\Npbar$ according to this input.
Notice that the system \eqref{eq:lastcovered} has a unique solution if and only if
the vectors $(a_{i_1}/b_{i_1},1),\ldots,(a_{i_{n+1}}/b_{i_{n+1}},1) \in \R^{n+1}$ are 
linearly independent, or equivalently, if $a_{i_1}/b_{i_{1}},\ldots,a_{i_{n+1}}/b_{i_{n+1}} \in \R^n$ are affinely independent.

\begin{algorithm}[H]
\caption{$\covradname{P, \beta_0,\mu_0}$}
\begin{algorithmic}[1]
 \STATE $\mu_{\mbox{max}} := 0$
 \FOR{$z_1,\ldots,z_{n+1}$ not necessarily distinct points in $\Npbar$}\label{alg:for1}
  \FOR{$a_{i_1},\ldots,a_{i_{n+1}}$ facet normals of $P$}\label{alg:for2}
   \IF{$a_{i_1}/b_{i_{1}},\ldots,a_{i_{n+1}}/b_{i_{n+1}}$ are affinely independent}\label{alg:if1}
    \STATE solve the linear system \eqref{eq:lastcovered} to obtain $(\mu,p)$
    \IF[Is $(\mu,p)$ relevant?]{$p\in[0,1]^n$ \AND $p\notin\inter(\mu P)+\Npbar$}\label{alg:if2}
     \STATE $\mu_{\mbox{max}} := \max\{\mu, \mu_{\mbox{max}}\}$
    \ENDIF
   \ENDIF
  \ENDFOR
 \ENDFOR
 \RETURN $\mu_{\mbox{max}}$
\end{algorithmic}
\end{algorithm}

The correctness of this algorithm follows directly from the previous considerations.

\begin{proposition}
\label{prop:alggood}
Let $P = \{x \in \Ren: a_i^\intercal x \leq b_i, \iim\}$ be a
facet description of the polytope $P$ with $b_i>0$, for all $i \in [m]$.
Assume $\beta(P)\leq \beta_0$ and $\mu(P)\leq \mu_0$, for some $\beta_0,\mu_0>0$.

Then, \covradname{P, \beta_0,\mu_0} returns the covering radius of~$P$.
\end{proposition}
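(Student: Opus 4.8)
The plan is to verify that \covradname{P,\beta_0,\mu_0} outputs exactly $\mu(P)$ by showing the returned value is both a lower bound and an upper bound for $\mu(P)$. The algorithm sets $\mu_{\max}$ to the maximum of all $\mu$-values arising from relevant solutions of the linear systems~\eqref{eq:lastcovered}, so we must argue (a) every such relevant $\mu$ satisfies $\mu\le\mu(P)$, and (b) the value $\mu(P)$ itself is attained as one of these relevant $\mu$-values.

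For the lower bound direction, I would take a relevant solution $(\mu,p)$, i.e. one passing the test in line~\ref{alg:if2}, meaning $p\in\cube$ and $p\notin\inter(\mu P)+\Npbar$. Since $\Npbar\supseteq\Np$, we in particular have $p\notin\inter(\mu P)+\Np$, and then I would invoke the equivalence~\eqref{eq:npisenoughB}: because $\inter(\mu P)+\Np\not\supseteq\cube$, it follows that $\mu\le\mu(P)$. Here one should note that $p$ itself witnesses that $\inter(\mu P)+\Np$ fails to contain $\cube$, since $p\in\cube$. Hence every candidate value contributing to $\mu_{\max}$ is at most $\mu(P)$, and so $\mu_{\max}\le\mu(P)$.

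For the upper bound direction, I would apply \Cref{prop:proofofalgorithm} with the given $\mu_0$: it yields facet normals $a_{i_1},\dots,a_{i_{n+1}}$ and lattice points $z_1,\dots,z_{n+1}\in\Np\subseteq\Npbar$ such that the system~\eqref{eq:lastcovered} has a unique solution $(\bar\mu,\bar p)$ with $\bar\mu=\mu(P)$ and $\bar p$ a last-covered point. Uniqueness of the solution means precisely that the vectors $a_{i_1}/b_{i_1},\dots,a_{i_{n+1}}/b_{i_{n+1}}$ are affinely independent, so this particular choice of $(z_1,\dots,z_{n+1})$ and $(a_{i_1},\dots,a_{i_{n+1}})$ is one of the iterations the algorithm actually runs, and the test in line~\ref{alg:if1} passes. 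It remains to check that the relevance test in line~\ref{alg:if2} also passes for this iteration: since $\bar p$ is last-covered with respect to~$P$, we have $\bar p\notin\inter(\mu(P)P)+\Zen=\inter(\bar\mu P)+\Zen\supseteq\inter(\bar\mu P)+\Npbar$, giving the second condition; and while \Cref{prop:proofofalgorithm} as stated need not place $\bar p$ in $\cube$, the remark preceding the proposition — that there is a last-covered point in $\cube$, and for it the $z_j$ may be chosen in $\Np$ — lets me select $\bar p\in\cube$ from the outset, so the first condition $p\in\cube$ holds as well. Therefore this iteration contributes $\bar\mu=\mu(P)$ to $\mu_{\max}$, so $\mu_{\max}\ge\mu(P)$. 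Combining the two directions gives $\mu_{\max}=\mu(P)$, proving the proposition.

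The only delicate point is the bookkeeping in the second direction: one has to be careful that the last-covered point produced by \Cref{lem:lastcovered} / \Cref{prop:proofofalgorithm} can simultaneously be taken inside the cube $\cube$ \emph{and} have all its associated lattice points $z_j$ lying in $\Np$ (hence in $\Npbar$), so that it genuinely corresponds to an iteration of the nested \textbf{for}-loops and survives both \textbf{if}-tests. This is exactly what the paragraph right before \Cref{prop:proofofalgorithm} asserts, so the argument is short, but it is the step where the geometry (periodicity of $\Zen$ plus boundedness of $P$, already used to define $\Np$ and $\Npbar$) does all the work; everything else is a direct translation of~\eqref{eq:npisenoughB} and \Cref{prop:proofofalgorithm} into the language of the algorithm.
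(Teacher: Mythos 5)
Your proposal is correct and follows essentially the same route as the paper: the lower bound $\mu_{\max}\le\mu(P)$ comes from the relevance test in Line~\ref{alg:if2} together with the equivalence~\eqref{eq:npisenoughB}, and the upper bound $\mu_{\max}\ge\mu(P)$ comes from \Cref{prop:proofofalgorithm} (together with the remark preceding it that one may take the last-covered point in $\cube$ with the $z_j$ in $\Np\subseteq\Npbar$). You have merely unpacked the paper's terse three-sentence argument into its constituent steps, including the bookkeeping about $\Npbar\supseteq\Np$ and the affine-independence test, which the paper leaves implicit.
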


\begin{proof}
By \Cref{prop:proofofalgorithm}, the two for-loops in Lines~\ref{alg:for1} and~\ref{alg:for2} list all choices of 
$z_1,\ldots,z_{n+1}$ and $a_{i_1},\ldots,a_{i_{n+1}}$ that need to be 
considered.
By~\eqref{eq:npisenoughB} and the second 'if' condition in Line~\ref{alg:if2} of the algorithm, the returned value $\mu_{\mbox{max}}$ is at most $\mu(P)$.
On the other hand, again by \Cref{prop:proofofalgorithm}, the returned 
value $\mu_{\mbox{max}}$ is at least~$\mu(P)$.
\end{proof}

The algorithm \covradname{P, \beta_0,\mu_0} and \Cref{prop:alggood} are only applicable if we have access to upper bounds on $\beta(P)$ and $\mu(P)$.
Our next task is to provide such bounds based on the description 
of~$P$.
We note that until this point, we made no assumption on the vectors $a_i$ and the right hand sides~$b_i$.

Now and in the sequel, we assume that~$P$ is a \emph{rational} 
polytope given by an \emph{integer facet description}, that is, $P = \{x \in \R^n : a_i^\intercal x \leq b_i, \iim\}$, where $a_i\in\Zen$ and $b_i\in\Z_{>0}$, for all~$\iim$.
We use the notation $\normp$ to denote the largest absolute value of the 
parameters appearing in the description of~$P$ above, that is,
\begin{equation*}
 \normp:=\max\left(\{\|a_i\|_{\infty}:\iim\}\cup\{b_i:\iim\}\right).
\end{equation*}

\noindent\textbf{First, we bound $\beta(P)$.} Observe that if $v$ is a 
vertex of $P$, then 
\begin{equation}\label{eq:vertexdescription}
v \mbox{ is the solution of a linear system } A_I x = b_I, 
\end{equation}
for some index set $I\subseteq[m]$ with $|I|=n$, where $A_I$ denotes the 
invertible square matrix obtained by considering only the rows of 
$A=(a_i^\intercal)_{\iim}$ indexed by~$I$, and $b_I$ is the corresponding vector of right hand sides.
Since $A_I$ has integer entries, we apply Cramer's rule and obtain that
\begin{equation}\label{eq:bbound}
 \beta(P)\leq \normp^n n!.
\end{equation}
Note that $\beta(P) = \max\{\|x\|_{\infty} : x\in P\}$ can be computed exactly in polynomial time (see~\cite{mangasarianshiau1986avariable}), which can be applied for an efficient implementation of our algorithm.
However, for the theoretical analysis that we are carrying out in these lines, we rely on the bound~\eqref{eq:bbound}.

\medskip
\noindent\textbf{Second, we bound $\mu(P)$.}
To do that, we define $\alpha(P)$ as the smallest $\alpha>0$ such that $\alpha P$ is a \emph{lattice polytope}, that is, all vertices of $\alpha P$ are lattice points. 
We observe that
\begin{equation}\label{eq:alphabound}
 \alpha(P)\leq\normp^n n!,
\end{equation}
which follows from \eqref{eq:vertexdescription} and again Cramer's rule combined with the estimate $\det(A_I)\leq\normp^n n!$.
Regarding the covering radius, we claim that
\begin{equation}\label{eq:gammabound}
 \mu(P)\leq\alpha(P)n.
\end{equation}
Indeed, let $\Delta$ be a non-degenerate lattice simplex contained in 
$\alpha(P) P$, one of whose vertices is $\zero$, and let $\Lambda$ denote the sublattice of $\Zen$ generated by the vertices of $\Delta$. 
Furthermore, let $\Delta_0=\conv(\{\zero,e_1,\ldots,e_n\})$ denote the standard lattice simplex.
Then
\begin{equation*}
\frac{1}{\alpha(P)}\mu(P) = \mu(\alpha(P)P,\Zen) \leq \mu(\Delta,\Zen) \leq \mu(\Delta,\Lambda) = \mu(\Delta_0,\Zen) = n,
\end{equation*}
where the first inequality follows from the fact that $\Delta \subseteq 
\alpha(P) P$, the second from $\Lambda \subseteq \Z^n$, and the next 
equality simply expresses a change of basis.
The fact that $\mu(\Delta_0)=n$ is well-known, and also immediate to verify.
This completes the proof of~\eqref{eq:gammabound}.

Another possibility to obtain a useful upper bound on~$\mu(P)$ and which is also a bit more tailored to the actual shape of the input polytope is based on the celebrated Flatness Theorem.
It states that there is a constant $\flt(n)$, the so-called \emph{flatness constant}, such that for every polytope $P \subseteq \R^n$ we have
\begin{equation}\label{eq:flatness-thm}
\mu(P) \leq \flt(n) \cdot w(P)^{-1},
\end{equation}
where
\[
w(P) = \min_{z \in \Z^n \setminus \{ \zero \}} \left(\max_{x \in P} x^\intercal z - \min_{x \in P} x^\intercal z\right)
\]
denotes the \emph{lattice-width} of~$P$.
The question to determine the asymptotic behavior of $\flt(n)$ as a function of the dimension~$n$ has attracted much interest in Integer Programming and the Geometry of Numbers in recent decades.
The best-known bound to date follows from a work of Rudelson~\cite{rudelson2000distances} and reads $\flt(n) \leq c \, n^{4/3} \log^a n$, for some unspecified absolute constants~$c$ and~$a$.
The worse but explicit bound $\flt(n) \leq n^{5/2}$ can be found in Barvinok's book~\cite[Ch.~VII, Thm.~(8.3)]{barvinok2002acourse}.

In order to get an explicit upper estimate in the flatness bound~\eqref{eq:flatness-thm}, we need a method to compute the lattice-width of the given polytope~$P$.
This can either be done via the interpretation of $w(P)$ as the first successive minimum of $(P-P)^\star$, that is, the polar of the difference body of~$P$ (see~\cite[Lem.~(2.3)]{kannanlovasz1988covering} for details), or via the algorithm described in~\cite{charrierfeschetbuzer2011computing}.

Since the lattice-width of a lattice polytope is at least~$1$, the definition of the parameter $\alpha(P)$ implies that $w(P)^{-1} \leq \alpha(P)$.
This shows that the bounds~\eqref{eq:gammabound} and~\eqref{eq:flatness-thm} are quite similar, and that the latter might be better for particular examples of~$P$.
For our further analysis we however always use the simpler bound~\eqref{eq:gammabound}.

We now run our algorithm $\covradname{P,\beta_0,\mu_0}$ with the values $\beta_0=\normp^n n!$ and $\mu_0=n\normp^n n!$, which are valid choices in view of the inequalities~\eqref{eq:bbound}, \eqref{eq:alphabound} and~\eqref{eq:gammabound}.
Slightly overloading notation, we abbreviate 
\[\text{\covradname{P}} = \text{\covradname{P, \normp^n n!,n\normp^n n!}},\]
and add that the algorithm starts with computing $\normp$ from the description of $P$, so it has a definition of~$\Npbar$.

Working towards an estimate on the time complexity of this algorithm, we first write down a bound on the cardinality of the set~$\Npbar$ respecting the previous choice of parameters:
\begin{equation}\label{eq:npbound}
 |\Npbar|\leq  
 \big(2+2\beta_0\mu_0\big)^n\leq
 \big(2+2n\normp^{2n} (n!)^2\big)^n\leq
 (\normp n)^{2n^2}.
\end{equation}
The two for-loops in $\covradname{P}$ have combined $|\Npbar|^{n+1} \cdot 
m^{n+1}$ iterations.
We may ignore the time needed for checking the 'if' condition in Line~\ref{alg:if1} and for solving the linear system~\eqref{eq:lastcovered}, since this can be done in polynomially many steps and is thus inferior in complexity to checking the second 'if' condition in Line~\ref{alg:if2}.
Indeed, given a candidate solution $(\mu,p)$, checking whether $p \notin \inter(\mu P) + \Npbar$ takes $m|\Npbar|$ steps.

In total, the algorithm takes $\O\left(|\Npbar|^{n+2} \cdot m^{n+2}\right)$ steps and with~\eqref{eq:npbound} we get the following upper bounds on the time complexities:
\begin{align*}
&\O\left((4 \beta_0 \mu_0)^{n(n+2)} \cdot m^{n+2}\right) &&\text{for} \quad \text{\covradname{P,\beta_0,\mu_0}}, \text{ and}\\
&\O\left((\normp n)^{2n^2(n+2)} \cdot m^{n+2}\right) &&\text{for} \quad \text{\covradname{P}}.
\end{align*}
We summarize the investigations of this section into our main result:

\begin{theorem}
\label{thm:algfast}
Let $P = \{x \in \R^n: a_i^\intercal x \leq b_i, \iim\}$ be a rational polytope, with $a_i \in \Zen$ and $b_i \in \Z_{>0}$, for all $\iim$.
Then, the algorithm \covradname{P} returns the covering radius of $P$ in time
\begin{equation*}
 \O\left((\normp n)^{2n^2(n+2)} \cdot m^{n+2}\right).
\end{equation*}
\end{theorem}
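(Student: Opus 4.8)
The plan is to assemble the theorem from the pieces already established, so that what remains is careful accounting rather than any new idea. For correctness, I would first verify that the call \covradname{P} feeds \emph{admissible} parameters into \covradname{P,\beta_0,\mu_0}, that is, that $\beta_0 = \normp^n n!$ and $\mu_0 = n\normp^n n!$ really do satisfy $\beta(P) \leq \beta_0$ and $\mu(P) \leq \mu_0$. The first inequality is~\eqref{eq:bbound}, which follows by writing each vertex of $P$ as the solution of a square integer system~\eqref{eq:vertexdescription} and applying Cramer's rule. The second is obtained by chaining $\mu(P) \leq \alpha(P)\,n$ from~\eqref{eq:gammabound} with $\alpha(P) \leq \normp^n n!$ from~\eqref{eq:alphabound}. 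Granting these two bounds, \Cref{prop:alggood} applies verbatim and yields that \covradname{P} returns $\mu(P)$; termination is clear because $\Npbar$ is a finite set of lattice points and $P$ has finitely many facets.

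Next I would bound the number of elementary operations performed. The outer for-loop in Line~\ref{alg:for1} ranges over the ordered $(n+1)$-tuples, with repetition allowed, of points of $\Npbar$, and the inner for-loop in Line~\ref{alg:for2} over the ordered $(n+1)$-tuples of facet normals of $P$, so there are $|\Npbar|^{n+1}\, m^{n+1}$ iterations. Inside a single iteration, the affine-independence test in Line~\ref{alg:if1} and the solution of the $(n+1)\times(n+1)$ linear system~\eqref{eq:lastcovered} are carried out by Gaussian elimination over $\Q$ in polynomially many bit operations in $n$ and $\log\normp$ (the occurring subdeterminants are bounded in absolute value by $\normp^n n!$), hence negligible compared with what follows. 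The dominant cost is the relevance check in Line~\ref{alg:if2}: testing $p \in \cube$ costs $\O(n)$, whereas testing $p \notin \inter(\mu P) + \Npbar$ amounts to checking, for each of the $|\Npbar|$ translation vectors $z \in \Npbar$, the $m$ strict inequalities $a_i^\intercal(p-z) < \mu\, b_i$, that is, $m\,|\Npbar|$ comparisons. Multiplying through, the algorithm runs in $\O\!\left(|\Npbar|^{n+2}\, m^{n+2}\right)$ elementary operations.

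Finally I would substitute the cardinality bound~\eqref{eq:npbound}. As $\Npbar = \Zen \cap (\cube - \mu_0\,[-\beta_0,\beta_0]^n)$ is contained in an axis-parallel box of edge length $1 + 2\beta_0\mu_0$, it has at most $(2 + 2\beta_0\mu_0)^n$ lattice points; inserting $\beta_0\mu_0 = n\,\normp^{2n}(n!)^2$ and estimating crudely gives $|\Npbar| \leq (\normp\, n)^{2n^2}$. Raising this to the power $n+2$ produces the asserted running time $\O\!\left((\normp\, n)^{2n^2(n+2)}\, m^{n+2}\right)$, and more precisely $\O\!\left((4\beta_0\mu_0)^{n(n+2)}\, m^{n+2}\right)$ for \covradname{P,\beta_0,\mu_0} run with arbitrary admissible $\beta_0,\mu_0$.

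There is no conceptual obstacle left once \Cref{lem:lastcovered} and the reduction in \Cref{prop:proofofalgorithm} are in hand; the final theorem is essentially their bookkeeping corollary. The two points I would be most careful about are keeping the model of computation consistent, so that the linear-algebra steps are genuinely dominated by the membership test rather than silently swept aside, and discharging the chain of estimates~\eqref{eq:bbound}, \eqref{eq:alphabound}, \eqref{eq:gammabound}, \eqref{eq:npbound} with enough numerical slack that every inequality holds for all $n \geq 1$, including the degenerate case $n = 1$.
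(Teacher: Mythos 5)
Your proof is correct and follows exactly the paper's own argument: you feed the Cramer-rule bounds \eqref{eq:bbound}, \eqref{eq:alphabound}, \eqref{eq:gammabound} into \Cref{prop:alggood} for correctness, count $|\Npbar|^{n+1}m^{n+1}$ iterations with a dominant $m|\Npbar|$ membership test per iteration, and close with the cardinality estimate \eqref{eq:npbound}. Nothing is missing and nothing is done differently.
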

  
\noindent Let us compare our result to the running time of Kannan's algorithm:
According to \Cref{thm:alg-kannan} his algorithm has time complexity
\[
(n m \log \normp)^{n^{\O(n)}}.
\]
With respect to~$n$ and $m$ we achieved a significant improvement since the dependence in our algorithm involves only one exponentiation, compared to a double exponentiation in Kannan's approach.
The dependence on~$\normp$ is only better in case that $\normp$ is of order at most~$n^{n^{\O(n)}}$, that is however, for all reasonably large input sizes.

\section{An application to Lonely Runners}
\label{sect:lonely-runners}

In this section, we use a geometric interpretation of the famous Lonely Runner Conjecture to illustrate the utility of a (more efficient) algorithm to compute the covering radius of a rational polytope.
The problem that we are concerned with was raised as a question on simultaneous Diophantine approximations by J\"org M.~Wills in the 1960's~\cite{willslonelyrunner}.

\begin{conjecture}[Lonely Runner Conjecture]
\label{conj:lrc-original}
Given pairwise distinct numbers $v_0, v_1, \ldots, v_d \in \R$, for each $0 
\leq i \leq d$ there exists a real number~$t$ such that for all $0 \le j \le 
d$, $i \ne j$, the distance of $t ( v_i - v_j )$ to the nearest integer is at 
least $\frac{ 1 }{ d+1 }$.
\end{conjecture}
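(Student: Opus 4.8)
The plan is to pass to the geometric reformulation of the problem, since in its stated arithmetic form the conjecture is, for general $d$, one of the notorious open problems of the area; what one can realistically aim at is a (computer-assisted) template that settles a given small value of~$d$. First I would normalize: fixing an index $i$ and replacing each $v_j$ by $v_j-v_i$, the claim for that $i$ becomes the assertion that for pairwise distinct reals $w_1,\dots,w_d$ (the differences $v_j-v_i$, $j\ne i$) there is a time $t$ with $\operatorname{dist}(t\,w_j,\Z)\ge\tfrac{1}{d+1}$ for all $j$. A standard limiting argument reduces to the case of rational $w_j$, and then, after clearing denominators in the time parameter, to integer $w_j$. In this situation one invokes the dictionary of~\cite{henzemalikiosis}: the set of ``lonely times'' is controlled by the covering radius of the zonotope $Z=Z(w_1,\dots,w_d)$ built from the segments associated with the velocities, and the Lonely Runner Conjecture for the index $i$ is equivalent to an inequality of the form $\mu(Z)\le\gamma_d$ with an explicit constant $\gamma_d$ in the appropriate normalization.

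Next, for a fixed number of runners $d$, I would argue that verifying $\mu(Z)\le\gamma_d$ for all velocity tuples reduces to a finite list of instances. This uses two facts already in play in the paper: the rationality together with the polynomial size bound on $\mu(P)$ for rational $P$ (\cite[Prop.~(5.1)]{kannan1992latticetranslates}), combined with a scaling/monotonicity normalization that caps the relevant velocities; and the algorithm $\covradname{P}$ of \Cref{thm:algfast} applied to each resulting zonotope, written in facet form as a rational polytope of bounded input size. For each candidate instance the algorithm returns $\mu(Z)$ exactly, and one compares it with $\gamma_d$. For $d\le 6$ this recovers the known cases, and the shifted variant for three runners is carried out in Section~\ref{sect:lonely-runners} in exactly this spirit: the problem is first reduced by hand to a short explicit list of triples of velocities, and then each is disposed of either directly or by running $\covradname{\cdot}$.

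The main obstacle is that this strategy does not produce a bound uniform in $d$: both the number of velocity tuples that must be inspected and the dimension $n=d$ of the zonotopes grow with $d$, and no structural reason is known that would collapse the verification to finitely many dimensions simultaneously. Hence the argument above is only a per-$d$ proof template, and the conjecture remains open in full generality; what the present work contributes is to make the per-instance covering-radius computation efficient and simple enough that the first previously inaccessible case (three runners with individual starting points) can actually be carried out, which is the content of the geometric reduction and computations in Section~\ref{sect:lonely-runners}.
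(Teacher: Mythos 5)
The statement you were given is \Cref{conj:lrc-original}, the Lonely Runner Conjecture itself, which is an \emph{open conjecture}: the paper does not prove it and does not claim to. You correctly recognize this, and your ``proposal'' is really a strategy sketch for a per-$d$ verification rather than a proof; that honesty is appropriate here. Since there is no proof in the paper to compare against, let me instead point out where your sketch diverges from what the paper actually does in the one case it settles (the shifted variant for $d=3$), and where it is imprecise.

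First, you describe the reduction from real to integer velocities as ``a standard limiting argument''. In fact this is a nontrivial lemma: the paper notes that Wills asserted it without proof, that the first written proof (Bohman--Holzman--Kleitman) is inductive on $d$ (so it presupposes lower-dimensional cases), and that an independent non-inductive proof appears as~\cite[Lem.~5.3]{henzemalikiosis}. The shifted version of this reduction is spelled out in Section~\ref{sect:reduction-integer-velocities} precisely because it cannot be waved through.

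Second, and more substantively, your mechanism for obtaining a \emph{finite} list of velocity tuples to check is not the right one. You invoke ``the rationality together with the polynomial size bound on $\mu(P)$ \ldots\ combined with a scaling/monotonicity normalization that caps the relevant velocities''. Kannan's bound on the bit size of $\mu(P)$ controls the complexity of computing the covering radius of a \emph{given} polytope; it does not by itself bound the set of velocity tuples one must inspect. What actually produces the finite list in Section~\ref{sect:geometric-reduction-three-runners} is a chain of geometric inequalities specific to the planar case: the sharp centrally-symmetric flatness bound~\eqref{eqn:planar-flatness} rules out $w(Z_v)\ge 4$, Lemma~\ref{lem:no-lattice-width-two-zonotopes} rules out $w(Z_v)=2$, and the Averkov--Wagner inequality~\eqref{eqn:averkov-wagner} then bounds $v_1+v_2+v_3$ once $w(Z_v)=3$. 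That is where the cap on velocities comes from, and the paper explicitly warns that this route does not extend cleanly even to $d=4$. Tao's reduction (mentioned in the remark after \Cref{thm:sLRC-dim3}) is the general-$d$ analogue of what you describe, but the resulting bound $|v_i|\in d^{\O(d^2)}$ is far too large to be exploited computationally, which is exactly the obstacle you correctly flag at the end. So your high-level picture (reduce to finitely many integer tuples, run $\covradname{\cdot}$) matches the paper's philosophy, but the specific tools you cite for the reduction step would not yield it.

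{document}
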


Independently of Wills, the problem also arose as a view-obstruction question in a work of Cusick~\cite{cusick1973viewobstruction}.
The name \emph{Lonely Runner Conjecture} goes back to the following descriptive interpretation due to Goddyn (1998):
Consider $d+1$ runners going at different constant velocities $v_0,v_1,\ldots,v_d$ around a circular track of length $1$ (having started at the same place and time).
Then, the conjecture says that each of them will at some point have distance at least $\frac{1}{d+1}$ to all the other runners.

A more convenient formulation of the problem is based on the
observation that the distance of any two runners at any given time depends only on their relative velocities.
So we may pick a fixed runner, say the one with velocity~$v_0$, reduce the velocity of every runner by $v_0$ and consider only the loneliness of the first runner that is now stagnant.

\begin{conjecture}[Lonely Runner Conjecture]
\label{LRC}
Given pairwise distinct positive $v_1, v_2, \ldots, v_d \in \R$, there exists a 
real number $t$ such that for all $1 \le j \le d$ the distance of $t v_j$ to 
the nearest integer is at least $\frac{ 1 }{ d+1 }$.
\end{conjecture}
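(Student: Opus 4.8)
Conjecture~\ref{LRC} is the full Lonely Runner Conjecture, which is open for $d \geq 7$; what follows is therefore a plan of attack rather than a proof, and it is essentially the plan underlying the present paper's actual contribution --- the Shifted variant for three runners (\Cref{thm:sLRC-dim3}). The overall idea is to convert the analytic statement, which is quantified over all times $t \in \R$, into a single geometric quantity, namely the covering radius of a rational polytope, and then to exploit its computability via \Cref{thm:algfast}. First I would recall the interpretation from~\cite{henzemalikiosis}: to the velocities $v_1,\ldots,v_d$ one associates a $d$-dimensional zonotope $Z = Z(v)$, a Minkowski sum of segments whose generators are built from the $v_j$, in such a way that the existence of a time $t$ with $\|t v_j\| \geq \tfrac{1}{d+1}$ for all $j$ becomes an inclusion statement for a lattice translate of a fixed dilate of $Z$ --- equivalently, a bound of the form $\mu(Z) \leq c_d$ with an explicit constant $c_d$ depending only on $d$. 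The point of this reformulation is that the continuum over $t$ disappears: the conjecture for a fixed velocity vector $v$ turns into a statement about $\mu$ of one concrete rational polytope.

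Second, for each fixed $d$ I would reduce to finitely many velocity vectors. Using the translation invariance $\mu(P+t) = \mu(P)$ and homogeneity, together with a lattice-width / flatness estimate on $Z(v)$ via~\eqref{eq:flatness-thm}, one argues that any potential counterexample may be taken to have integer velocities whose coordinates are bounded by an explicit function of $d$. This is the standard reduction that brings the problem into the scope of our algorithm. For small $d$ one then enumerates the resulting finite family of zonotopes and verifies the bound $\mu(Z) \leq c_d$ for every member --- by hand for the easy instances and by running \covradname{Z} for the rest --- which is exactly how the three-runner Shifted case is disposed of.

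The hard part, and the reason Conjecture~\ref{LRC} remains open, is the passage from ``small $d$'' to ``all $d$'': the family of candidate velocity vectors grows quickly, and no argument is known that eliminates entire families uniformly in the number of runners. Closing this gap would require genuinely new structural input --- Fourier and density-increment techniques in the spirit of Tao~\cite{tao2018someremarks}, flatness-type bounds tailored to the specific zonotopes $Z(v)$, or an inductive reduction from $d$ to $d-1$ such as the one suggested by Kravitz's Loneliness Spectrum Conjecture~\cite{kravitz2019barely}. The computational approach developed in this paper removes the bottleneck only for small $d$ (and, in the shifted setting, for $d = 3$); it does not by itself yield the conjecture in general.
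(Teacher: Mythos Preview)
You correctly recognize that \Cref{LRC} is an open conjecture, not a theorem proved in the paper; there is no proof to compare against, and your ``plan of attack'' rather than a proof is the appropriate response. Your outline is broadly consistent with the strategy the paper actually executes for the shifted variant with $d=3$.

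Two points of precision are worth flagging. First, the zonotope $Z_v$ associated to $d$ velocities lives in $\R^{d-1}$, not $\R^d$; its generators are the columns of a $(d-1)\times d$ matrix whose rows form a basis of $\Z^d \cap v^\perp$. Second, and more substantively, the equivalence $\mu(Z_v) \leq \tfrac{d-1}{d+1}$ is the reformulation of the \emph{Shifted} Lonely Runner Conjecture (\Cref{conj:sLRC}), not of \Cref{LRC} itself. As the paper notes after~\eqref{eq:sLRC-zonotopal}, the original \Cref{LRC} corresponds only to the question whether one \emph{specific} translate of $\tfrac{d-1}{d+1}Z_v$ contains a lattice point, which is strictly weaker than the covering-radius bound. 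So your step ``the existence of a time $t$ with $\|tv_j\| \geq \tfrac{1}{d+1}$ becomes a bound $\mu(Z) \leq c_d$'' overshoots: that bound would already give the stronger shifted statement. This does not invalidate your plan --- proving the shifted version certainly implies \Cref{LRC} --- but it is worth being clear about which conjecture the covering-radius reformulation captures.

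Your description of the finiteness reduction (integer velocities, then bounded velocities via flatness-type estimates) matches the paper's actual argument for $d=3$, which combines \Cref{prop:pairwise-coprime-d3}, the planar flatness bound~\eqref{eqn:planar-flatness}, and the Averkov--Wagner inequality~\eqref{eqn:averkov-wagner}; and your assessment of the obstruction to general $d$ agrees with the paper's closing discussion.
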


For background information and related literature on the Lonely Runner Problem we refer the interested reader to~\cite{kravitz2019barely} or the survey article~\cite{schymurawills2018dereinsame} (in german).


Wills~\cite{willslonelyrunner} stated without proof that \Cref{LRC} can be reduced to the case of only integral velocities $v_1,\ldots,v_d \in \Z_{>0}$.
The first written proof was presented by Bohman, Holzman \& Kleitman~\cite[Lem.~8]{bohmanholzmankleitman2001six}, however, it is inductive on the dimension, so, in order to confirm the Lonely Runner Conjecture in a given dimension, it assumes that it is shown in lower dimensions.
An independent proof of this reduction can be found in~\cite[Lem.~5.3]{henzemalikiosis}.
For this reason we always assume in the sequel that every runner runs with an integral velocity.
Since the problem is moreover invariant under simultaneous scalings of the velocities, we may also assume that $\gcd(v_1,\ldots,v_d)=1$.

The reduction of the problem to integer velocities is crucial for its reformulation in the Geometry of Numbers.
An interpretation of \Cref{LRC} as a question about the existence of lattice points in certain convex regions has only recently been worked out (see~\cite{henzemalikiosis,beckhostenschymura2018lrpolyhedra}).
We discuss some of its details in Section~\ref{sect:zonotopal-lrc}.

For pairwise distinct integral velocities, it was conjectured 
in~\cite[Conj.~1]{beckhostenschymura2018lrpolyhedra} (and in a personal communication by J\"org M.~Wills himself) that Wills' conjecture is equivalent to a seemingly more general variant.
The point is that the runners with non-zero velocities may start running at arbitrarily chosen points on the track rather than all starting from the same position.

\begin{conjecture}[Lonely Runners with Individual Starting Points]
\label{conj:sLRC}
Given pairwise distinct non-zero 
velocities $v_1,\ldots,v_d \in \R$ 
and arbitrary starting points $s_1,\ldots,s_d \in \R$, there is a real number~$t$ such that for all $1 \leq j \leq d$ the distance of $s_j + t v_j$ to the nearest integer is at least $\frac{1}{d+1}$.
\end{conjecture}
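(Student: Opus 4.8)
The plan is to deduce \Cref{conj:sLRC} from a uniform bound on the covering radii of a family of zonotopes, so that the algorithm of \Cref{thm:algfast} does the final work. \textbf{Step 1 (reduction to integer velocities).} Exactly as for the classical conjecture (\cite{bohmanholzmankleitman2001six}, see also \cite{beckhostenschymura2018lrpolyhedra}), one first reduces to the case of pairwise distinct positive integers $v_1,\dots,v_d$ with $\gcd(v_1,\dots,v_d)=1$; the starting points stay arbitrary reals and are dealt with all at once by the reformulation below. \textbf{Step 2 (zonotopal dictionary).} Write $v=(v_1,\dots,v_d)$, let $\pi_v\colon\R^d\to v^{\perp}\cong\R^{d-1}$ be the orthogonal projection along $v$, and put $W_v:=\pi_v([0,1]^d)$ and $\Lambda_v:=\pi_v(\Z^d)$, a full-rank lattice in $\R^{d-1}$ since $v$ is primitive. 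The condition ``for every $s\in\R^d$ there is $t\in\R$ such that the distance of $s_j+tv_j$ to $\Z$ is at least $\frac1{d+1}$ for all $j$'' says precisely that the $\Z^d$-translates of the line-sweep $\R v+[\frac1{d+1},\frac{d}{d+1}]^d$ cover $\R^d$, and projecting along $v$ this is equivalent to
\begin{equation*}
 \mu\!\left(W_v,\Lambda_v\right)\;\le\;\frac{d+1}{d-1}
\end{equation*}
(this is the zonotopal interpretation of \cite{henzemalikiosis}; note that it is the \emph{shift} that turns the problem from the ``one lattice-free position'' phrasing of the unshifted conjecture into a genuine covering-radius inequality). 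Fixing a basis of $\Lambda_v$ rewrites this as $\mu(P_v)\le\frac{d+1}{d-1}$ for a rational polytope $P_v\subseteq\R^{d-1}$ with $O(d^2)$ facets and integer facet description of size polynomial in $\|v\|_{\infty}$, so \Cref{conj:sLRC} is \emph{equivalent} to this inequality holding for every admissible~$v$.

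\textbf{Step 3 (taming the velocity tuples).} Since there are infinitely many admissible $v$, one needs to cut the family $\{P_v\}$ down to a finite, or otherwise surveyable, list. The idea is to adapt the Bohman--Holzman--Kleitman peeling argument: if one velocity, say $v_d$, is large relative to the others, then the generator $\pi_v(e_d)$ of $W_v$ is short, and one expects to be able either to delete runner $d$ (inducting on $d$) or to bound $v_d$ from above by an explicit function of the remaining data; iterating would leave only tuples $v$ with $\|v\|_{\infty}$ bounded by an explicit function of $d$. For each of the finitely many survivors, $P_v$ is an explicitly given rational polytope, so $\covradname{P_v}$ computes $\mu(P_v)$ exactly and one checks the bound; moreover \Cref{lem:lastcovered} furnishes a finite certificate — a last-covered point together with $n+1$ facet-defining equations — for each critical instance.

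\textbf{Main obstacle.} Step 3 is where the conjecture is genuinely stuck: no reduction of the classical Lonely Runner Conjecture — let alone of its shifted strengthening — to velocity tuples of bounded size is known for general $d$, and a complete proof of \Cref{conj:sLRC} cannot be easier than \Cref{conj:lrc-original} (take all $s_j=0$), which is itself open for $d\ge7$. What is feasible along this route is to carry it out in a fixed low dimension: for $d=3$ the polytopes $P_v$ are planar, the relevant velocity triples can be enumerated, and \Cref{thm:algfast} (or a short by-hand computation) disposes of each of them, thereby settling the first open case of \Cref{conj:sLRC}.
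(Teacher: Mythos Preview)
You correctly recognise that \Cref{conj:sLRC} is open in general and that a full proof cannot be expected; the paper indeed does not prove the conjecture but only its $d=3$ instance (\Cref{thm:sLRC-dim3}). Your Steps~1 and~2 match the paper's set-up, with one slip: the projection of $[\tfrac1{d+1},\tfrac{d}{d+1}]^d$ along $v$ is a translate of $\tfrac{d-1}{d+1}\,W_v$, so the equivalent inequality is $\mu(W_v,\Lambda_v)\le\tfrac{d-1}{d+1}$, not $\tfrac{d+1}{d-1}$ (for $d=3$ this is $\mu\le\tfrac12$, as in~\eqref{eq:sLRC-zonotopal}).

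The substantive divergence is in Step~3. Your proposed reduction --- a ``Bohman--Holzman--Kleitman peeling'' to bound $\|v\|_\infty$ --- is not what the paper does and, as stated, is not known to go through even for $d=3$ in the shifted setting. The paper's reduction for $d=3$ is specific and quite different: first reduce to \emph{pairwise} coprime speeds (\Cref{prop:pairwise-coprime-d3}, using the $d=2$ case on two of the three runners); then observe that the planar hexagon $Z_v$ always has lattice-width $w(Z_v)\ge3$ (\Cref{lem:no-lattice-width-two-zonotopes}); the sharp planar flatness bound~\eqref{eqn:planar-flatness} for centrally symmetric bodies disposes of all $v$ with $w(Z_v)\ge4$; and for the remaining case $w(Z_v)=3$ the Averkov--Wagner volume inequality~\eqref{eqn:averkov-wagner} forces $v_1+v_2+v_3\le9$, leaving exactly four triples to feed to the algorithm. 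This lattice-width/volume argument is the actual engine of the finiteness reduction, and the paper explicitly notes (after \Cref{thm:sLRC-dim3}) that it does not extend to $d\ge4$ with current flatness constants --- which is also why a generic ``peeling'' heuristic is unlikely to succeed without new ideas.
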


Similarly to \Cref{LRC}, this can be reduced to pairwise distinct integral velocities $v_1,\ldots,v_d \in \Z_{>0}$; the argument is along the lines of the proof of~\cite[Lem.~5.3]{henzemalikiosis} and, for the sake of completeness, we provide full details below in Section~\ref{sect:reduction-integer-velocities}.

\Cref{conj:sLRC} holds true for two non-stagnant runners, that is, for $d=2$ as shown in~\cite{beckhostenschymura2018lrpolyhedra}.
For any other $d \geq 3$ the question is open.
In particular, in other cases where \Cref{LRC} holds, for instance, for any $3 \leq d \leq 6$, it is not clear yet whether \Cref{conj:sLRC} is indeed equivalent.

The assumption that the velocities $v_i$ shall be pairwise distinct is crucial in Conjecture~\ref{conj:sLRC}.
If we allow repeating velocities, then we are in the setting of a theorem of Schoenberg~\cite{schoenberg1976extremum} (see~\cite[Thm.~4]{beckhostenschymura2018lrpolyhedra} for an alternative proof).

\begin{theorem}[{Schoenberg~\cite{schoenberg1976extremum}}]
\label{thm:schoenberg}
Given integral velocities $v_1,\ldots,v_d \in \Z_{>0}$ and starting points $s_1,\ldots,s_d \in \R$, there is a real number~$t$ such that for all $1 \leq j \leq d$ the distance of $s_j + t v_j$ to the nearest integer is at least $\frac{1}{2d}$. 

Furthermore, this bound cannot be improved for $v_1=\ldots=v_d=1$ and starting points $s_i = \frac{i-1}{d}$, for $1 \leq i \leq d$.
\end{theorem}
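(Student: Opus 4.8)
The plan is to recast the statement as a question about a covering, exactly in the spirit used for the Lonely Runner Conjecture. Set $v = (v_1,\ldots,v_d) \in \Z_{>0}^d$ and $s = (s_1,\ldots,s_d) \in \R^d$. The claim ``there is a real $t$ with $\operatorname{dist}(s_j + t v_j, \Z) \ge \tfrac{1}{2d}$ for all $j$'' is equivalent to saying that the line $\{s + t v : t \in \R\}$ in $\R^d$ meets the set $\bigcup_{z \in \Z^d}\bigl(z + [\tfrac{1}{2d}, 1 - \tfrac{1}{2d}]^d\bigr)$, i.e.\ the lattice arrangement of the cube $C := [\tfrac{1}{2d}, 1-\tfrac{1}{2d}]^d$ of edge length $1 - \tfrac1d$. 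Equivalently, passing to the quotient torus $\R^d/\Z^d$, we must show that the one-dimensional subgroup generated by the direction $v \bmod \Z^d$ (which, since $\gcd$ of the $v_i$ may be taken $1$, is a closed line winding around the torus) intersects the shrunken cube $C$ — and since $C$ has edge $1 - 1/d$, we may work with its closure.

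First I would reduce to a purely one-dimensional statement. The image of the line $\{t v : t\in\R\}$ in $\R/\Z$ in each coordinate $j$ is the set $\{t v_j \bmod 1 : t \in \R\} = \R/\Z$ (each coordinate is surjective), but the \emph{joint} image is governed by the subgroup $G = \overline{\{t v \bmod \Z^d\}}$. The key classical fact is that a point $t v + s$ avoids $C + \Z^d$ for \emph{all} $t$ precisely when the translated line misses the cube, and by a standard averaging/pigeonhole argument over a full period one can locate a good $t$. Concretely, consider the function $f(t) = \sum_{j=1}^d g(s_j + t v_j)$, where $g : \R/\Z \to \R_{\ge 0}$ is a suitable ``tent'' or trigonometric bump supported on the bad region $(-\tfrac1{2d}, \tfrac1{2d})$; showing $\min_t f(t) = 0$ is the goal. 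Integrating $f$ over one period and expanding $g$ in Fourier series, the cross terms vanish unless a resonance $\sum c_j v_j = 0$ occurs, and one checks — using that the $v_j$ are positive and distinct — that the average of $f$ is small enough to force a $t$ with $f(t) = 0$. This is exactly the type of estimate in~\cite{schoenberg1976extremum} and the alternative argument in~\cite[Thm.~4]{beckhostenschymura2018lrpolyhedra}; I would follow the latter, which is phrased geometrically and handles the starting points $s_j$ transparently.

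For the sharpness claim, take $v_1 = \cdots = v_d = 1$ and $s_i = \tfrac{i-1}{d}$. Then at any time $t$ the $d$ points $s_i + t = \tfrac{i-1}{d} + t$ are equally spaced around $\R/\Z$ with gaps exactly $\tfrac1d$; hence one of them lies within distance $\tfrac{1}{2d}$ of $0$, and moreover there is a $t$ realizing distance exactly $\tfrac1{2d}$ simultaneously for whichever point is closest — so the bound $\tfrac1{2d}$ cannot be replaced by anything larger. This direction is a direct computation: the minimum over $t$ of $\max_i \operatorname{dist}(\tfrac{i-1}{d}+t,\Z)$ equals $\tfrac1{2d}$, attained for instance at $t = \tfrac1{2d}$.

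The main obstacle is the positive direction — proving the $\tfrac{1}{2d}$ lower bound with arbitrary shifts $s_j$. The subtlety is that the shifts destroy any parity or symmetry one might exploit in the classical shiftless case, so the Fourier/averaging argument must be robust to translation; the crucial point is that translation only multiplies Fourier coefficients by unimodular phases, leaving the \emph{absolute values} of the cross-term contributions unchanged, which is why the shiftless bound $\tfrac{1}{2d}$ survives verbatim. I would isolate this observation first and then import the estimate from~\cite{schoenberg1976extremum} or~\cite{beckhostenschymura2018lrpolyhedra} rather than redoing the Fourier bookkeeping.
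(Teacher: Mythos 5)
Note first that the paper does not prove Theorem~\ref{thm:schoenberg}; it is quoted from Schoenberg~\cite{schoenberg1976extremum}, with~\cite[Thm.~4]{beckhostenschymura2018lrpolyhedra} cited for an alternative proof, so you are supplying a proof the paper itself omits. Your sketch of the positive direction has a genuine gap. Integrating $f(t)=\sum_{j}g(s_j+tv_j)$ over one period produces no cross terms at all --- the integral is simply $d\,\hat g(0)$ --- and knowing that the mean of a nonnegative $f$ is small only shows that $f$ takes small values somewhere, not that $f$ hits zero, so the inference ``the average of $f$ is small enough to force a $t$ with $f(t)=0$'' is not valid. You also invoke the velocities being ``positive and distinct,'' but this theorem does \emph{not} assume distinctness; indeed the extremal configuration has $v_1=\cdots=v_d=1$, so an argument relying on distinctness would exclude exactly the equality case. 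The fix is a sharp measure count rather than a Fourier estimate: for each $j$, the map $t\mapsto s_j+tv_j \bmod 1$ on $[0,1)$ is $v_j$-to-one and measure-preserving, so the bad set $B_j=\{t\in[0,1):\operatorname{dist}(s_j+tv_j,\Z)<\tfrac{1}{2d}\}$ is a finite union of open intervals of total measure exactly $\tfrac{1}{d}$, whence $\sum_j|B_j|=1$. If $\big|\bigcup_j B_j\big|<1$ a good $t$ exists; otherwise the $B_j$ are pairwise almost disjoint, their union is a union of pairwise almost disjoint open arcs of total length $1$, and any common endpoint of two adjacent arcs lies outside every $B_j$, again giving the desired~$t$.

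The sharpness computation is stated with the extremes swapped. With $v_1=\cdots=v_d=1$ and $s_i=\tfrac{i-1}{d}$, the correct assertion is $\max_t \min_i \operatorname{dist}\bigl(\tfrac{i-1}{d}+t,\Z\bigr)=\tfrac{1}{2d}$, not ``the minimum over $t$ of $\max_i$'' (for $d=3$ and $t=\tfrac16$, for example, $\max_i$ is $\tfrac12$, not $\tfrac16$). Your supporting reasoning --- equally spaced points with gaps $\tfrac1d$, so one of them is always within $\tfrac{1}{2d}$ of an integer --- proves exactly the corrected statement, with equality at $t=\tfrac{1}{2d}$.
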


Our aim is to solve the three runner problem\footnote{According to Conjecture~\ref{conj:lrc-original}, one could see this actually as the four-runner problem, as we assume that one of the runners is stagnant, that is, it has zero speed.} of \Cref{conj:sLRC}, that is, the case $d=3$.
Before we review the geometric approach hinted at above, we first need to reduce this problem to integer velocities.

\subsection{Reduction of Conjecture~\ref{conj:sLRC} to integer velocities}	
\label{sect:reduction-integer-velocities}

First, we re\-phrase Conjecture~\ref{conj:sLRC} as follows: Let~$\eps > 0$ be such that for a given set of pairwise distinct
velocities $v_1,\ldots,v_d > 0$ and every $s=(s_1,\dotsc,s_d)\in\R^d$, there is $t\in\R$ such that 
\[
\eps\leq\set{s_j+tv_j}\leq1-\eps, \quad \text{ for all } 1 \leq j \leq d.
\]
Here and in the sequel, we use the standard notation $\set{x} = x - \lfloor x \rfloor$ for the fractional part of a real number~$x \in \R$.
Now, we need to show that the maximal possible $\eps > 0$, such that the above property is satisfied for every set of pairwise distinct positive velocities, is equal to $\frac{1}{d+1}$.

In other words, the distance from $s_j+tv_j$ to a half-integer is at most $\frac{1}{2}-\eps$. Denoting $v=(v_1,\dotsc,v_d)$, we can further rephrase the above problem as follows:
Suppose that for every $s\in\R^d$, the line $s+\R v$ has $\ell_\infty$-distance $\leq\frac{1}{2}-\eps$ from the shifted lattice
\[\Z^d+(\tfrac{1}{2},\dotsc,\tfrac{1}{2}),\]
or equivalently, the point $(\tfrac{1}{2},\dotsc,\tfrac{1}{2})$ has $\ell_\infty$-distance $\leq\frac{1}{2}-\eps$ from the lattice arrangement of lines $s+\Z^d+\R v$.
It is thus crucial to describe the (closure of) the set $\Z^d+\R v$, or equivalently, the line $\R v$ modulo the integer lattice. 
This description was given in \cite[Lem.~2.3]{henzemalikiosis}:

\begin{lemma}
\label{latticelines}
The closure of the lattice arrangement of lines $\Z^d+\R v$ equals
\[
\mathcal{E}_v : =\set{ \xi \in \R^d : \ell^\intercal \xi \in \Z, \;\textrm{for all } \ell \in \Z^d \cap v^{\perp}},
\]
where $v^\perp$ denotes the orthogonal complement of the linear hull of~$v$.
\end{lemma}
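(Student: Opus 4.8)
The plan is to prove the two inclusions $\overline{\Z^d + \R v} \subseteq \mathcal{E}_v$ and $\mathcal{E}_v \subseteq \overline{\Z^d + \R v}$ separately, the second being the substantial one. For the first inclusion, note that $\mathcal{E}_v$ is closed (it is an intersection of preimages of the closed set $\Z$ under continuous linear functionals), so it suffices to show $\Z^d + \R v \subseteq \mathcal{E}_v$. Given $z \in \Z^d$, $s \in \R$, and any $\ell \in \Z^d \cap v^\perp$, we compute $\ell^\intercal(z + sv) = \ell^\intercal z + s\,\ell^\intercal v = \ell^\intercal z \in \Z$, since $\ell^\intercal v = 0$. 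Hence $z + sv \in \mathcal{E}_v$, and taking closures gives the first inclusion.

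For the reverse inclusion, I would argue as follows. The set $\Lambda_0 := \Z^d \cap v^\perp$ is a sublattice of $\Z^d$; let $k = \operatorname{rank}(\Lambda_0)$ and let $L = \operatorname{lin}(\Lambda_0) = \operatorname{lin}_\R(v^\perp \cap \Z^d)$, so $\dim L = k$. Observe that $\mathcal{E}_v$ is exactly the set of $\xi$ that pair integrally with every element of $\Lambda_0$, i.e. $\mathcal{E}_v = \Lambda_0^\star$, the dual lattice-like object; concretely $\mathcal{E}_v = (\Z^d \cap L) \text{-orthogonal shifts}$, and one checks $\mathcal{E}_v$ is a $\Z^d$-periodic union of translates of $L^\perp$. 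The key structural fact is that $\mathcal{E}_v$ decomposes as $M + L^\perp$ where $M$ is the projection to $L$ of the lattice dual to $\Lambda_0$ inside $L$; since $\Lambda_0$ is a full-rank lattice in $L$, this dual is a genuine lattice in $L$, so $\mathcal{E}_v$ is a disjoint union of countably many parallel affine copies of $L^\perp$. Now I would use that $v \in L^\perp$ (indeed $v \perp v^\perp \supseteq \Lambda_0$, and $\operatorname{lin}(\Lambda_0) = L$ gives $v \perp L$, i.e. $v \in L^\perp$), together with the fact that, by the definition of $L$ as the rational span of $v^\perp \cap \Z^d$, the line $\R v$ is "maximally irrational" inside $L^\perp$: the only rational vectors orthogonal to $L^\perp$ are those in $L$, hence the image of $\R v$ in the torus $L^\perp / (\Z^d \cap L^\perp)$ is equidistributed, so its closure is all of $L^\perp / (\Z^d \cap L^\perp)$, i.e. $\overline{\Z^d + \R v} \supseteq \overline{(\Z^d \cap L^\perp) + \R v}$ fills out an entire coset $L^\perp$ worth of points modulo the complementary lattice directions. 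Combining with the $\Z^d$-periodicity then yields every coset in the decomposition of $\mathcal{E}_v$, giving $\mathcal{E}_v \subseteq \overline{\Z^d + \R v}$.

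Concretely, the cleanest route for the equidistribution step is Weyl's criterion / Kronecker's theorem: after choosing a basis that identifies $L^\perp \cap \Z^d$-cosets with a torus $\mathbb{T}^{d-k}$, the closure of the image of $\{sv : s \in \R\}$ is a subtorus whose "annihilator" in $\Z^{d-k}$ consists precisely of the integer linear relations $m$ with $m^\intercal v = 0$ in the new coordinates; by construction of $L$ there are no nontrivial such relations among the complementary coordinates, so the subtorus is everything. I would then lift back: a point $\xi \in \mathcal{E}_v$ lies in some coset $w + L^\perp$ with $w$ in the dual lattice; using periodicity I reduce to $w = 0$, i.e. to showing $L^\perp \subseteq \overline{\Z^d + \R v}$, which is exactly the statement that the flow direction $v$ fills its coset, established above.

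The main obstacle I anticipate is making the "maximal irrationality" claim fully rigorous — i.e., verifying that the closure of $\R v$ modulo $\Z^d$ is precisely the connected subgroup $L^\perp$ and not something smaller, and correctly matching up the lattice of cosets appearing in $\mathcal{E}_v$ (the dual of $\Lambda_0$ in $L$) with the cosets actually hit by $\Z^d + \R v$. This is where one must be careful that $L$ was defined as the \emph{rational} span of $v^\perp \cap \Z^d$ (not merely $v^\perp$ itself), since otherwise the two sides need not coincide; this rationality is exactly what forces the Weyl sums to vanish only in the expected directions. The rest — closedness of $\mathcal{E}_v$, the first inclusion, and the bookkeeping of periodic translates — is routine.
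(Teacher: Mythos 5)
The paper does not contain a proof of this lemma; it is stated as a quotation of Lemma~2.3 in the cited work of Henze and Malikiosis, so there is no in-paper proof to compare against. Assessing your sketch on its own terms: the overall strategy is correct, and the two points you flag at the end are indeed the only places where real work remains, both of which close with standard lattice duality and Kronecker's theorem.

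For (a), the ``maximal irrationality'' claim: you need that $L^\perp$ is the smallest \emph{rational} subspace of $\R^d$ containing~$v$. If $W$ were rational with $v \in W \subsetneq L^\perp$, then $W^\perp$ would be a rational subspace with $L \subsetneq W^\perp \subseteq v^\perp$; rationality forces $W^\perp \cap \Z^d$ to have rank $\dim W^\perp > \dim L$, yet $W^\perp \cap \Z^d \subseteq v^\perp \cap \Z^d = \Lambda_0 \subseteq L$, a contradiction. Then Kronecker's theorem on the torus $L^\perp / (\Z^d \cap L^\perp)$ gives $\overline{(\Z^d \cap L^\perp) + \R v} = L^\perp$. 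For (b), the coset matching: first note $\Z^d \cap L = \Lambda_0$ (one inclusion since $\Lambda_0 \subseteq L$, the other since $L \subseteq v^\perp$), and then invoke the identity, valid because $\Z^d$ is self-dual and $L$ is rational, that the dual lattice of $\Z^d \cap L$ inside~$L$ equals the orthogonal projection $\pi_L(\Z^d)$. Hence $\mathcal{E}_v = \Lambda_0^{\ast} + L^\perp = \pi_L(\Z^d) + L^\perp = \Z^d + L^\perp$, which together with (a) yields $\mathcal{E}_v = \Z^d + \overline{(\Z^d \cap L^\perp) + \R v} \subseteq \overline{\Z^d + \R v}$. Combined with your easy first inclusion, and the observation that $\Z^d + L^\perp$ is already closed (its $L^\perp$-cosets are indexed by the discrete set $\pi_L(\Z^d)$, so it is a locally finite union of closed affine subspaces), equality follows. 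So your proposal is a correct outline that a careful write-up would turn into a complete proof.
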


Now, we are ready to show the reduction to integer velocities.
For this purpose, let $\eta$ denote the $\ell_\infty$-distance from the point $(\tfrac{1}{2},\dotsc,\tfrac{1}{2})$ to the lattice arrangement
of lines $s+\Z^d+\R v$, which by Lemma \ref{latticelines} equals the $\ell_\infty$-distance from the point $(\tfrac{1}{2},\dotsc,\tfrac{1}{2})$ to $s+\mathcal{E}_v$. 
It suffices to find an integer vector $y=(y_1,\dotsc,y_d)$ with pairwise distinct positive coordinates, such that the distance from $(\tfrac{1}{2},\dotsc,\tfrac{1}{2})$ to
$s+\mathcal{E}_y$, denoted by $\eta'$, satisfies $\eta\leq\eta'$. 

Consider the lattice $\La_v=\Z^d\cap v^\perp$, which has rank $r\leq d-1$. Therefore, the set of integer vectors orthogonal to all elements of $\La_v$ forms a lattice of rank $d-r\geq1$, say $\Ga$. Consider the
subspace $V$ of $\R^d$ spanned by $\Ga$, so that $v\in V$. The vector $v$ also belongs to the intersection of $V$ with the positive orthant $\R_{>0}^d$; since the latter is nonempty, it must be a convex cone
with apex at the origin and dimension $d-r$, hence the dimension of the linear hull of $\Ga^+:=\Ga\cap \R_{>0}^d$ must also be exactly $d-r$. Moreover, since $v$ has pairwise distinct positive coordinates, 
it does not belong to any hyperplane~$H_{ij}$ with defining equation $x_i=x_j$, for any $1\leq i<j\leq d$.
This shows that the intersection $V_{ij}=H_{ij}\cap V$ has dimension exactly $d-r-1$, hence the sublattice $\Ga_{ij}:=\Ga\cap H_{ij}$ has rank at most $d-r-1$. Thus, the set
\[
\Ga^+\setminus \Bigg( \bigcup_{1\leq i<j\leq d}\Ga_{ij} \Bigg) \subseteq \Z^d \cap \R^d_{>0}
\]
is nonempty, and we take an arbitrary element in this set, say $y$.

We show that $y$ satisfies the desired properties.
First of all, $y$ has positive integer coordinates, which are also pairwise distinct since $y\notin H_{ij}$ by definition, for all
$1\leq i<j\leq d$.
We prove that $\mathcal{E}_y\subseteq\mathcal{E}_v$, thus finishing the desired reduction.

Let $\xi\in\mathcal{E}_y$ be arbitrary, so that $\ell^\intercal \xi \in \Z$ for all $\ell \in \Z^d \cap y^\perp$ in view of Lemma~\ref{latticelines}.
We recall that $y\in\Ga$, and every element of $\Ga$ is orthogonal to every element of $\La_v$, therefore $\La_v\subseteq\Z^d\cap y^\perp$. This shows that $\ell^\intercal \xi \in \Z$ also holds for all $\ell\in\La_v$, which eventually yields $\xi\in\mathcal{E}_v$, again by Lemma~\ref{latticelines}.
As~$\xi$ was arbitrary, we obtain $\mathcal{E}_y\subseteq\mathcal{E}_v$ as desired, hence $\eta\leq\eta'$.

\subsection{Zonotopes associated with the Lonely Runner Problem}
\label{sect:zonotopal-lrc}

We use the geometric interpretation of the Lonely Runner Problem established in~\cite{henzemalikiosis} and~\cite{beckhostenschymura2018lrpolyhedra} in order to reduce the case $d=3$ of \Cref{conj:sLRC} to finitely many instances, which we then separately tackle by the covering radius algorithm developed in Section~\ref{sect:novel-algorithm}.

Czerwi\'{n}ski \& Grytczuk~\cite[Thm.~6]{czerwinskigrytczuk2008invisible} describe the time $t$ at which the maximal distance~$\lambda_v$ of the runners from the stagnant runner is attained, for the case $s=(s_1,\ldots,s_d) = \zero$.
Their arguments work for every \emph{fixed} starting configuration $s \in \R^d$ and they imply an algorithm with time complexity $\O(d^2 \cdot v_{max})$, where $v_{max} = \max_{1 \leq i \leq d} v_i$, to compute this distance~$\lambda_v$.
One can thus computationally check Conjecture~\ref{conj:sLRC} for a given velocity vector~$v$ and an arbitrary but fixed starting point~$s$.
For the full statement of Conjecture~\ref{conj:sLRC} we however need to do this computation a priori for \emph{every} starting configuration of the runners.
Here, the covering radius comes into play.

We now describe the essential parts on how to transform Conjecture~\ref{conj:sLRC} into a problem on bounding the covering radius of certain lattice zonotopes.
For full details on the geometric arguments used in this section, as well as the reduction of the Lonely Runner Conjecture and similar versions thereof for finding (or proving the existence of) lattice points inside certain zonotopes, we refer the reader to \cite[Sect.~2.2 \& 2.3]{henzemalikiosis} and~\cite{beckhostenschymura2018lrpolyhedra}.

As described after the statement of~\cite[Conj.~10]{beckhostenschymura2018lrpolyhedra}, or in~\cite[Thm.~1.1]{henzemalikiosis}, the following holds for an arbitrary number of runners~$d$:
With every set of integer speeds $0 < v_1 < \ldots < v_d$, with $\gcd(v_1,\ldots,v_d) = 1$, we associate a lattice zonotope $Z_v$ in $\R^{d-1}$ generated by~$d$ lattice vectors $u_1,\ldots,u_d \in \Z^{d-1}$ in \emph{general linear position}, which means that every $d-1$ of them are linearly independent.
More precisely,
\[
Z_v := \sum_{j=1}^d [\zero,u_j] \subseteq \R^{d-1}.
\]
The connection between the generators $u_1,\ldots,u_d$ and the \emph{velocity vector} $v=(v_1,\ldots,v_d)$ is described in detail in \cite{henzemalikiosis}, in the discussion after Lemma~3.1 therein.
The main points are as follows:
Since $v \in \Z^d_{>0}$, the intersection $\La_v=\Z^d \cap v^{\perp}$ is a sublattice of~$\Z^d$ of dimension $d-1$.
Consider any $(d-1)\times d$ matrix $A$, whose rows $a_1,\ldots,a_{d-1}$ constitute a basis of the lattice~$\La_v$.
The associated lattice zonotope $Z_v$ is then generated by the 
\emph{columns} $u_1,\ldots,u_d$ of $A$.\footnote{This zonotope is not, of course, unique, but all such zonotopes are unimodularly equivalent and thus have the same properties with respect to the volume and the covering radius, described afterwards.}
 
One important fact concerning the velocities~$v_i$ and the zonotope $Z_v$ is the following:
For each $i \in [d]$, the volume of the parallelepiped $\sum_{j=1,j \neq i}^d [\zero,u_j]$, spanned by all but one of the~$d$ generators of~$Z_v$, equals~$v_i$.
In particular,
\begin{equation}\label{zonovolume}
 \vol(Z_v) = v_1 + v_2 + \ldots + v_d.
\end{equation}
Now, it can be inferred from~\cite[Sect.~5]{henzemalikiosis}, that the~$d$ runners with velocities $v_1,\ldots,v_d$ satisfy \Cref{conj:sLRC} if and only if
\begin{align}
\mu(Z_v) \leq \frac{d-1}{d+1}.\label{eq:sLRC-zonotopal}
\end{align}
This means that after contracting~$Z_v$ by the factor $\frac{d-1}{d+1}$, we may shift it anywhere in~$\R^{d-1}$ and always find some lattice point of~$\Z^{d-1}$ that is contained in the shifted copy.
The originial Lonely Runner Conjecture corresponds to the question whether a \emph{certain} position of $\frac{d-1}{d+1} Z_v$ contains a lattice point.
For this reason, we may call \Cref{conj:sLRC} the \emph{Shifted Lonely Runner Conjecture} in the sequel.

Before we describe how this geometric point of view can be used to reduce Conjecture~\ref{conj:sLRC} for $d=3$ to only a few explicitly given velocity vectors, we remark that it suffices to consider three speeds that are \emph{pairwise} coprime.

\begin{proposition}
\label{prop:pairwise-coprime-d3}
Let $v=(v_1,v_2,v_3) \in \Z^3_{>0}$ with $\gcd(v_1,v_2,v_3)=1$ be a velocity vector with pairwise distinct coordinates, and let $s \in \R^3$ be a triple of starting points.
Suppose further that $\gcd(v_1,v_2)=\ell>1$.
Then, there exists some $t\in\R$ such that $\frac14 < \set{s_j+v_jt} < \frac34$, for every $j = 1,2,3$.
\end{proposition}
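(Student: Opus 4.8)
The plan is to recast the assertion as a \emph{strict} bound on the covering radius of the zonotope $Z_v$ and to establish it by slicing $Z_v$ along the long lattice segment that the gcd hypothesis forces inside it.

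Since $\gcd(v_1,v_2,v_3)=1$ and $\gcd(v_1,v_2)=\ell$, we have $\gcd(\ell,v_3)=1$. By the equivalence~\eqref{eq:sLRC-zonotopal}, together with the observation that the strict inequalities $\set{s_j+v_jt}\in(\tfrac14,\tfrac34)$ correspond to the strict covering bound, it suffices to prove $\mu(Z_v)<\tfrac12$, where $Z_v=[0,u_1]+[0,u_2]+[0,u_3]\subseteq\R^2$ with $u_1,u_2,u_3\in\Z^2$ in general linear position and $\abs{\det(u_i,u_j)}=v_k$ whenever $\{i,j,k\}=\{1,2,3\}$. The three planar vectors satisfy a linear relation with coefficients $\pm v_1,\pm v_2,\pm v_3$; replacing some $u_i$ by $-u_i$ (which only translates $Z_v$) we may assume $v_1u_1+v_2u_2+v_3u_3=0$. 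Because $\ell\mid v_1,v_2$ while $\gcd(\ell,v_3)=1$, this forces $\ell\mid u_3$, say $u_3=\ell u_3'$ with $u_3'\in\Z^2$; comparing determinants, $\abs{\det(u_1,u_3')}=v_2/\ell$ and $\abs{\det(u_2,u_3')}=v_1/\ell$ are coprime, so $u_3'$ is primitive.

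Next I would apply a unimodular transformation sending $u_3'$ to $(1,0)$, so that $u_3\mapsto(\ell,0)$, and flip $u_1,u_2$ if needed, reducing to $Z_v=[0,(a,b)]+[0,(c,d)]+[0,(\ell,0)]$ with integers $b=v_2/\ell\ge1$, $d=v_1/\ell\ge1$, and $ad\ne bc$ (as $v_3=\abs{ad-bc}\ne0$). Since $v_1\ne v_2$ we get $b\ne d$, hence $b+d\ge3$. Now slice by horizontal lines: the fibre of $Z_v$ over a height $y_0$ is an $x$-interval of length $\ell+\mathrm{osc}_{\sigma_{y_0}}(as+ct)$, where $\sigma_{y_0}=\{(s,t)\in[0,1]^2:bs+dt=y_0\}$ and $\mathrm{osc}$ is maximum minus minimum. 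As $as+ct$ is affine with gradient not parallel to $(b,d)$ (because $ad\ne bc$), this oscillation is positive for every $y_0$ in the open interval $(0,b+d)$, vanishes only at its endpoints, and is bounded below by a constant $\kappa=\kappa(v)>0$ once $y_0$ stays a fixed distance away from $0$ and $b+d$. Consequently, for every $\mu<\tfrac12$ the fibres of $\mu Z_v$ have $x$-length at least $\mu\ell$, and at least $\mu\ell+\mu\kappa$ away from the top and bottom of $\mu Z_v$.

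It remains to promote the trivial covering $\tfrac12 Z_v+\Z^2=\R^2$ (immediate from $\ell\ge2$ and $b+d\ge3$) to $\mu Z_v+\Z^2=\R^2$ for some $\mu<\tfrac12$. A point $(x_0,y_0)$ with $y_0\in[0,1)$ is covered as soon as some vertical translate $\mu Z_v+(0,z_2)$, $z_2\in\Z$, has its fibre at height $y_0-z_2\in[0,\mu(b+d)]$ of $x$-length $\ge1$, since the $\Z\times\{0\}$-translates of that fibre then sweep the whole horizontal line. Because $b+d\ge3$, for $\mu$ sufficiently close to $\tfrac12$ we have $\mu(b+d)>1$, and one checks that every $y_0\in[0,1)$ admits some $z_2\in\{0,-1\}$ for which the height $y_0-z_2$ stays a fixed distance from $0$ and $\mu(b+d)$; the corresponding fibre then has length at least $\mu\ell+\mu\kappa$, which is $\ge1$ once $\mu$ is close enough to $\tfrac12$ (here $\ell\ge2$ and $\kappa>0$ are used). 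Fixing such a $\mu<\tfrac12$ yields $\mu(Z_v)<\tfrac12$, and by the previous paragraphs the proposition follows. I expect the crux to be exactly this last step in the borderline case $\ell=2$: there $\mu\ell=2\mu<1$ whenever $\mu<\tfrac12$, so no single fibre can cover its line and one genuinely has to interleave vertically shifted copies of $\mu Z_v$, which is why both $b+d\ge3$ (i.e.\ $v_1\ne v_2$) and the positivity of the interior oscillation (i.e.\ $v_3\ne0$) are needed; everything else is linear algebra over $\Z$.
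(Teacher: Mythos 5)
Your argument is correct in substance (modulo carefully tracking constants in the final covering step, which do work out), but it takes a genuinely different, more geometric route than the paper's. The paper's proof stays entirely at the level of fractional parts: it invokes the already-established $d=2$ case of Conjecture~\ref{conj:sLRC} to find $t_0$ with $\set{s_j+v_jt_0}\in[\tfrac13,\tfrac23]$ for $j=1,2$, observes that shifting $t_0$ by multiples of $\tfrac1\ell$ leaves $\set{s_1+v_1t}$ and $\set{s_2+v_2t}$ unchanged, uses a roots-of-unity averaging argument (via $\gcd(\ell,v_3)=1$, so $\sum_{k=0}^{\ell-1}e^{2\pi i v_3 k/\ell}=0$) to locate a shift $t'$ with $\set{s_3+v_3t'}\in[\tfrac14,\tfrac34]$, and then makes all three inequalities strict by a small one-sided perturbation. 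You instead prove $\mu(Z_v)<\tfrac12$ directly by a fibring argument on the zonotope; the divisibility hypothesis enters as $u_3=\ell u_3'$ with $u_3'$ primitive, which after a unimodular normalization produces the long horizontal generator $(\ell,0)$, and $v_1\neq v_2$ appears as $b+d\geq 3$, needed for the vertical interleaving. The paper's version is shorter and more elementary and makes the reliance on the known two-runner case explicit; yours is self-contained, gives a concrete geometric reason for strictness, and its normal form $Z_v=[\zero,(a,b)]+[\zero,(c,d)]+[\zero,(\ell,0)]$ is a reusable observation. Two things to tighten: the paper only records the non-strict equivalence~\eqref{eq:sLRC-zonotopal}, so your opening step that strict runner inequalities correspond to $\mu(Z_v)<\tfrac12$ is using a compactness argument that should be stated; and in the final paragraph the constant $\kappa$ depends on the margin $\delta$, which depends on $\mu$, so the claim that $\mu\ell+\mu\kappa\geq 1$ for $\mu$ near $\tfrac12$ needs a uniformity remark (e.g.\ taking $\delta=\tfrac{\mu(b+d)-1}{2}$, the relevant heights stay bounded away from $0$ and $b+d$ as $\mu\to\tfrac12^-$ precisely because $b+d\geq 3$).
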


\begin{proof}
Because \Cref{conj:sLRC} holds for $d=2$ (i.e.~for a stagnant runner and two runners with distinct integer non-zero speeds $v_1$ and $v_2$ and arbitrary starting points $s_1$ and $s_2$), 
there is some $t_0\in\R$ such that $\frac13 \leq \set{s_j+v_jt_0} \leq \frac23$, for $j=1, 2$.
Since $\ell$ divides $v_1$ and $v_2$, for any
\[
t'\in\set{t_0,t_0+\frac{1}{\ell},\ldots,t_0+\frac{\ell-1}{\ell}},
\]
the fractional parts $\set{s_j+v_jt'}$ and $\set{s_j+v_jt_0}$ agree, for $j=1,2$.
Furthermore, for at least one of the values for~$t'$ we must have $\frac14 \leq \set{s_3+v_3t'} \leq \frac34$, since $\gcd(\ell,v_3)=1$.
Indeed, consider the following complex numbers of modulus one:
\[z_k=e^{2\pi i(s_3+v_3t_0+v_3\frac{k}{\ell})}, \;\;\;\; 0\leq k\leq \ell-1.\]
The inequality $\frac14 \leq \set{s_3+v_3t_0+v_3\frac{k}{\ell}} \leq \frac34$ is equivalent to $\mathrm{Re}(z_k) \leq 0$.
If the latter fails for every $k \in \{0,\ldots,\ell-1\}$, then the
real part of the sum of the $z_k$ must be positive. But this does not hold, as $\ell > 1$ and thus
\[
\sum_{k=0}^{\ell-1}z_k=e^{2\pi i(s_3+v_3t_0)}\sum_{k=0}^{\ell-1}e^{2\pi iv_3\frac{k}{\ell}}=0.
\]
Now, for this choice of~$t'$, we have $\frac14 < \frac13\leq \set{s_j+v_jt'}\leq\frac23 < \frac34$, for $j=1,2$, and therefore there is some $\varepsilon>0$ such that
 $\frac14 < \set{s_j+v_jt} < \frac34$ holds for all $t\in(t'-\varepsilon,t'+\varepsilon)$. Moreover, there is a half-interval having~$t'$ as an endpoint, i.e.~of the form $(t',t'+\delta)$ or $(t'-\delta,t')$ for some $\delta>0$, such that 
$\frac14 < \set{s_3+v_3t} < \frac34$ holds as well, for every $t$ in this interval. 
We conclude that there is some $t$ close to $t'$ (in particular, having distance at most $\max(\delta,\varepsilon)$)
such that $\frac14 < \set{s_j+v_jt} < \frac34$ holds for every $j = 1,2,3$, as desired.
\end{proof}

In view of Proposition \ref{prop:pairwise-coprime-d3} we are left with the case when the speeds $v_1,v_2,v_3$ are pairwise coprime.
In this case, we can use the extended Euclidean algorithm to write $1$ as an integer linear combination of $v_1$ and $v_2$. Using this representation, we can represent $v_3$ as
\[
v_3=\kappa v_1+\la v_2, \ \text{ for some }	\kappa, \la\in v_3\Z.
\]
The associated zonotope~$Z_v$ is then generated by the columns of the following $2\times 3$ matrix:
\begin{equation}\label{LRzonotope}
\begin{pmatrix}
 v_2 & -v_1 & 0\\
 \kappa & \la & -1
\end{pmatrix}.
\end{equation}
To verify this, we observe first that both rows are orthogonal to the velocity vector $v=(v_1,v_2,v_3)$ and are linearly independent. Then, if we add the row
$(\kappa/v_3, \la/v_3, 0)$ to the above matrix, we obtain a matrix with determinant~$1$, thus showing that the two rows of \eqref{LRzonotope} generate $\Z^3\cap v^\perp$.
It is also worthwhile mentioning that the absolute values of the $2\times2$ determinants in \eqref{LRzonotope} are precisely $v_1, v_2, v_3$.


\subsection{A geometric reduction for the case of three runners with non-zero velocities}
\label{sect:geometric-reduction-three-runners}

Throughout the following let $v=(v_1,v_2,v_3) \in \Z_{>0}^3$ be a velocity vector with pairwise distinct speeds, and let $Z_v=\sum_{i=1}^3[\zero,u_i]$ be the associated planar lattice zonotope as defined above.
In view of~\eqref{zonovolume}, we have $\vol(Z_v)=v_1+v_2+v_3$.
Since $Z_v$ has only integral vertices, its lattice-width $w(Z_v)$ is an integer.
Moreover, as the generators of $Z_v$ are in general linear position, there is at least one lattice point that is interior to $Z_v$.
In fact, since~$u_3$ is not parallel to~$u_1$ or~$u_2$, adding the segment $[\zero,u_3]$ to the parallelogram $Q := [\zero,u_1]+[\zero,u_2]$ turns one of the four vertices of~$Q$ into an interior point of~$Z_v$.
Such an interior lattice point implies $w(Z_v) \geq 2$.

By virtue of~\eqref{eq:sLRC-zonotopal}, for proving the Shifted Lonely Runner Conjecture for~$v$, we need to show that $\mu(Z_v) \leq \frac12$.
To get a first reduction, we apply the flatness theorem~\eqref{eq:flatness-thm} in the plane for centrally symmetric bodies, with the best possible constant, which is~$2$.
In fact, it follows from~\cite[Cor.~2.7]{averkovwagner2012inequalities} that
\begin{align}
\mu(Z_v) \leq 2 \cdot w(Z_v)^{-1},\label{eqn:planar-flatness}
\end{align}
because $Z_v$ as a zonotope is centrally symmetric.\footnote{This inequality, and an extension to larger dimensions, has been claimed in~\cite[Thm.~(2.13)]{kannanlovasz1988covering}, but the proof has an issue that to the best of our knowledge was not fixed as of today. A different and valid proof for the case of dimension two is provided in~\cite{averkovwagner2012inequalities}.}
Moreover, for the inequality to be tight it is necessary for~$Z_v$ to be a parallelogram.
However, as~$Z_v$ is a hexagon, this means we actually have strict inequality.
Therefore, if $w(Z_v) \geq 4$, then $\mu(Z_v) < \frac12$, and thus we may assume that $w(Z_v) \in \{2,3\}$ in the sequel.

We first show that the case of lattice-width $w(Z_v) = 2$ cannot occur.

\begin{lemma}
\label{lem:no-lattice-width-two-zonotopes}
Let $v \in \Z^3_{>0}$ be a velocity vector with pairwise distinct entries.
Then, the planar lattice zonotope $Z_v$ has lattice-width $w(Z_v) \geq 3$.
\end{lemma}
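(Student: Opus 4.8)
The plan is to prove the contrapositive: assume $w(Z_v) = 2$ and derive a contradiction with the hypothesis that $v_1, v_2, v_3$ are pairwise distinct. Since $Z_v$ is a lattice zonotope, its lattice-width is attained by some primitive integer functional $\ell \in \Z^2 \setminus \{\zero\}$, meaning $\max_{x \in Z_v} \ell^\intercal x - \min_{x \in Z_v} \ell^\intercal x = 2$. Using the zonotopal structure $Z_v = \sum_{j=1}^3 [\zero, u_j]$, this width equals $\sum_{j=1}^3 |\ell^\intercal u_j|$. So the assumption $w(Z_v) = 2$ forces $\sum_{j=1}^3 |\ell^\intercal u_j| = 2$ for the width-minimizing $\ell$, while for \emph{every} nonzero primitive $\ell$ we have $\sum_{j=1}^3 |\ell^\intercal u_j| \geq 2$ (this lower bound is exactly the content of the remark in the excerpt that $w(Z_v) \geq 2$, coming from the existence of an interior lattice point).

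First I would set up the arithmetic dictionary. Recall from Section~\ref{sect:zonotopal-lrc} that the generators $u_1, u_2, u_3$ are the columns of a $2 \times 3$ matrix $A$ whose rows form a basis of $\La_v = \Z^3 \cap v^\perp$, and that the $2\times 2$ minors of $A$ have absolute values $v_1, v_2, v_3$ (in the appropriate order: deleting column $i$ gives a minor of absolute value $v_i$). Equivalently, if $u_j = (p_j, q_j)^\intercal$, then $|p_2 q_3 - p_3 q_2| = v_1$, $|p_1 q_3 - p_3 q_1| = v_2$, $|p_1 q_2 - p_2 q_1| = v_3$. For a primitive functional $\ell$, the quantity $|\ell^\intercal u_j|$ measures (up to the primitivity normalization) how the generator projects; the key combinatorial fact is a "tropical/exchange" identity relating the three numbers $|\ell^\intercal u_j|$ to the three minors $v_i$. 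Specifically, for any two indices the determinant $\det(u_j \mid u_k)$ can be written via $\ell$ and a complementary functional, which yields relations of the form $v_i = |\, (\ell^\intercal u_j)(m^\intercal u_k) - (\ell^\intercal u_k)(m^\intercal u_j)\,|$ for a completion $m$ of $\ell$ to a basis of $\Z^2$.

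Next I would enumerate the possibilities for $(|\ell^\intercal u_1|, |\ell^\intercal u_2|, |\ell^\intercal u_3|)$ with sum $2$: up to permutation the only options are $(1,1,0)$ and $(2,0,0)$. In the case $(2,0,0)$, two of the generators are parallel to the fixed line $\ell^\perp$, hence parallel to each other, contradicting the general linear position of $u_1, u_2, u_3$ (equivalently, one of the $v_i$ would vanish, contradicting $v_i > 0$). In the case $(1,1,0)$, say $\ell^\intercal u_3 = 0$ and $|\ell^\intercal u_1| = |\ell^\intercal u_2| = 1$; completing $\ell$ to a unimodular basis $(\ell, m)$ of $\Z^2$ and writing everything in these coordinates, $u_3 = (0, c)$ for some $c \neq 0$, $u_1 = (\pm 1, a)$, $u_2 = (\pm 1, b)$. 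Then $v_3 = |{\det(u_1 \mid u_2)}| = |\pm b \mp a|$, while $v_1 = |{\det(u_2 \mid u_3)}| = |c|\cdot|\!\pm\!1| = |c|$ and $v_2 = |{\det(u_1 \mid u_3)}| = |c|$. Hence $v_1 = v_2$, contradicting the assumption that the velocities are pairwise distinct. (One should double-check the index bookkeeping so that the two generators forced to be "short" are precisely the ones whose complementary minors coincide; this is a sign-and-labeling check, not a hard point.)

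The main obstacle I anticipate is purely bookkeeping: making the identification between "$\ell^\intercal u_3 = 0$" and "$v_1 = v_2$" airtight, i.e.\ tracking which minor corresponds to which $v_i$ under the convention that deleting column $i$ yields $\pm v_i$, and handling the signs of $\ell^\intercal u_1, \ell^\intercal u_2 \in \{+1, -1\}$ uniformly (the case $b - a$ versus $b + a$ both give a single velocity value equal to $|c|$ for the other two, so the conclusion $v_1 = v_2$ is robust). Once that is pinned down, the argument is a short finite case analysis. An alternative, slightly slicker route avoiding coordinates: if $w(Z_v) = 2$ then $Z_v$ lies between two consecutive lattice lines two apart; the two "short" generators each cross exactly one lattice line, the long generator none; projecting $Z_v$ onto $\ell^\perp$ collapses it to a segment spanned essentially by $u_3$, and then comparing areas of the parallelograms $[\zero,u_2]+[\zero,u_3]$ and $[\zero,u_1]+[\zero,u_3]$ — both equal to $|c| \cdot (\text{common horizontal extent } 1)$ — gives $v_1 = v_2$ directly. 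I would write up whichever of these is cleaner after checking the index convention, but expect the coordinate version to be the most transparent.
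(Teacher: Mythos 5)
Your proof is correct and reaches the conclusion by a genuinely different final step than the paper. Both arguments begin by assuming $w(Z_v) = 2$ and observing that a width-minimizing primitive direction must be orthogonal to exactly one generator; your derivation of this step, decomposing $2 = \sum_{j=1}^3 |\ell^\intercal u_j|$ into nonnegative integers and ruling out $(2,0,0)$ via general linear position, makes explicit what the paper asserts more informally by appealing to ``$Z_v$ is a hexagon.'' Where the two proofs diverge is in extracting the contradiction: the paper, after fixing $z \perp u_1$, notes that $P_2 = [\zero,u_1]+[\zero,u_3]$ and $P_3 = [\zero,u_1]+[\zero,u_2]$ have lattice-width $1$ in direction $z$, so all their lattice points sit on the two edges parallel to $u_1$, hence $P_2$ and $P_3$ contain equally many lattice points and (implicitly by Pick's theorem) have equal area, giving $v_2 = v_3$. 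You instead apply a unimodular change of coordinates sending $\ell$ to $e_1$, write $u_3 = (0,c)$, $u_1 = (\pm 1, a)$, $u_2 = (\pm 1, b)$, and read off $v_1 = |\det(u_2,u_3)| = |c| = |\det(u_1,u_3)| = v_2$ directly from the minors. Your algebraic route is self-contained and avoids the lattice-point-counting step entirely; the paper's version keeps the argument closer to the covering-geometry picture (and to Figure~\ref{fig:zonotope-width}), at the modest cost of quietly invoking Pick. The labeling difference ($v_1 = v_2$ versus $v_2 = v_3$) is immaterial since both are forbidden under pairwise distinctness. One small remark on exposition: your phrase ``completing $\ell$ to a unimodular basis $(\ell,m)$ and writing everything in these coordinates'' should be understood as applying a unimodular transformation that sends $\ell$ to $e_1$, so that $\ell^\intercal u_j$ becomes the first coordinate of the transformed $u_j$ and determinants are preserved up to sign; as written it could be misread as expanding the $u_j$ in the basis $(\ell,m)$ of the primal space, which would not make $\ell^\intercal u_j$ the first coordinate.
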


\begin{proof}
We argue by contradiction and assume that $w(Z_v) = 2$.
The zonotope~$Z_v$ decomposes into (suitable lattice translates of) the three lattice parallelograms $P_j = \sum_{i=1, i\neq j}^3 [\zero,u_i]$, for $j=1,2,3$.
Since $w(Z_v)=2$ and $Z_v$ is a hexagon, there must be a direction $z \in \Z^2 \setminus \{\zero\}$ such that $w(Z_v) = \max_{x \in Z_v} x^\intercal z - \min_{x \in Z_v} x^\intercal z = 2$ and $z$ is orthogonal to an edge (and thus to a generator) of~$Z_v$.
Without loss of generality, we assume that~$z$ is orthogonal to~$u_1$.
This implies that $w(P_2) = w(P_3) = 1$ and that all the lattice points contained in~$P_2$ and $P_3$ are distributed on the edges parallel to~$u_1$.
Compare with the illustration in Figure~\ref{fig:zonotope-width}.
Since the $P_j$ are lattice parallelograms this means that $P_2$ and $P_3$ contain exactly the same number of lattice points, and thus they have the same area.
This means however that $v_2 = \vol(P_2) = \vol(P_3) = v_3$ (see the discussion before~\eqref{zonovolume}), contradicting the assumption that the entries of~$v$ are pairwise distinct.
\end{proof}

\begin{figure}[ht]
\includegraphics[scale=1]{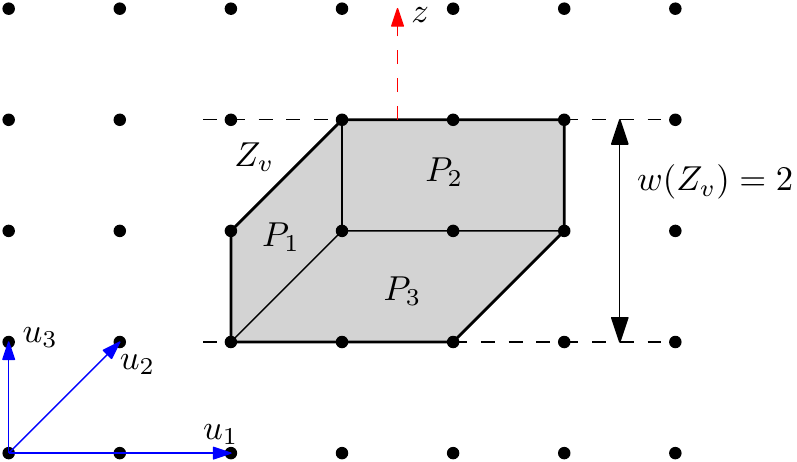}
\caption{If $w(Z_v) = 2$, then the areas of the parallelograms $P_1$, $P_2$, $P_3$ cannot be pairwise distinct.}
\label{fig:zonotope-width}
\end{figure}

Let us now consider the case that $w(Z_v)=3$.
By definition of the lattice-width, there is a translation that puts the dilate $\frac{1}{w(Z_v)} Z_v$ in between two parallel neighboring lattice lines.
In particular, $\frac{1}{w(Z_v)} Z_v$ has a lattice-free translate and thus $\mu(Z_v)w(Z_v) \geq 1$.
If $1=\mu(Z_v)w(Z_v)=3\mu(Z_v)$, then $\mu(Z_v) = \frac13 < \frac12$ and so we can safely assume that $\mu(Z_v)w(Z_v) > 1$.
Under this condition, Averkov \& Wagner~\cite[Cor.~2.7]{averkovwagner2012inequalities} proved that
\begin{align}
\vol(Z_v) \leq \frac{w(Z_v)^2}{2\mu(Z_v)w(Z_v)-2} = \frac{9}{6\mu(Z_v)-2}.\label{eqn:averkov-wagner}
\end{align}
Rearranging terms yields the inequality
\[
\mu(Z_v) \leq \frac{3}{2 \vol(Z_v)} + \frac13 = \frac{3}{2 (v_1+v_2+v_3)} + \frac13,
\]
and thus $\mu(Z_v) < \frac12$ as soon as $v_1+v_2+v_3 \geq 10$.

In view of the investigations above and Proposition~\ref{prop:pairwise-coprime-d3}, we only need to check the triples $v=(v_1,v_2,v_3) \in \Z^3_{>0}$ 
with pairwise relatively prime entries and $v_1+v_2+v_3 \leq 9$ for establishing the Shifted Lonely Runner Conjecture for three non-zero runners, 
and characterize the extremal velocity vectors along as well.
In particular, these are only the four triples:
\[
(1,2,3),
(1,2,5),
(1,3,4),(1,3,5).
\]
%

We implemented our algorithm $\covradname{P}$ in \texttt{sage}~\cite{sagemath} and computed the covering radius of the four remaining cases, with the results gathered in Table~\ref{tbl:computer-examples-shortlist}.
The source code of our implementation can be found at \url{https://github.com/mschymura/covering-radius-computation}.

\begin{table}[ht]
\begin{tabular}{|l|c|c|}
\hline
$(v_1,v_2,v_3)$ & generators for $Z_v$ & $\mu(Z_v)$ \\ \hline\hline
\phantom{.}$(1,2,3)$ & $\begin{pmatrix} 2 & -1 & 0\\ 3 & 0 & -1 \end{pmatrix}$ & $\frac{1}{2}$ \\ \hline
\phantom{.}$(1,2,5)$ & $\begin{pmatrix} 2 & -1 & 0\\ 5 & 0 & -1 \end{pmatrix}$ & $\frac{3}{7}$ \\ \hline
\phantom{.}$(1,3,4)$ & $\begin{pmatrix} 3 & -1 & 0\\ 4 & 0 & -1 \end{pmatrix}$ & $\frac{3}{7}$ \\ \hline
\phantom{.}$(1,3,5)$ & $\begin{pmatrix} 3 & -1 & 0\\ 5 & 0 & -1 \end{pmatrix}$ & $\frac{3}{8}$ \\ \hline
\end{tabular}
\caption{The covering radius of the zonotopes~$Z_v$ corresponding to the four remaining triples above.}
\label{tbl:computer-examples-shortlist}
\end{table}

Of course, the validity of the bound $\mu(Z_v) \leq \frac12$ could also be checked by hand for these very special cases.
The lattice-width of the zonotope corresponding to the first three examples equals~$3$, whereas for the case $v=(1,3,5)$ we have $w(Z_v) = 4$ and thus $\mu(Z_v) < \frac12$ holds by the discussion after~\eqref{eqn:planar-flatness}.
We chose to compute the exact covering radii for informative reasons.
In summary, we have proven:

\begin{theorem}
\label{thm:sLRC-dim3}
\Cref{conj:sLRC} holds for $d=3$ and the only extremal velocity vector (up to scaling and permuting coordinates) is $v=(1,2,3)$.
\end{theorem}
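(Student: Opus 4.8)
The plan is to combine all the reductions carried out earlier in Section~\ref{sect:geometric-reduction-three-runners} with the explicit computations in Table~\ref{tbl:computer-examples-shortlist}. By the zonotopal reformulation~\eqref{eq:sLRC-zonotopal} with $d=3$, proving \Cref{conj:sLRC} for $d=3$ is equivalent to showing $\mu(Z_v)\leq\frac12$ for every velocity vector $v=(v_1,v_2,v_3)\in\Z^3_{>0}$ with pairwise distinct entries and $\gcd(v_1,v_2,v_3)=1$ (the reduction to integer velocities is handled in Section~\ref{sect:reduction-integer-velocities}, and scaling normalizes the gcd). First I would invoke Proposition~\ref{prop:pairwise-coprime-d3}: if two of the speeds share a common factor $\ell>1$, the conjecture already holds, so it remains to treat pairwise coprime triples. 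Next, \Cref{lem:no-lattice-width-two-zonotopes} rules out $w(Z_v)=2$, and the flatness estimate~\eqref{eqn:planar-flatness} together with the hexagonality of $Z_v$ forces strict inequality, hence $\mu(Z_v)<\frac12$ whenever $w(Z_v)\geq4$. For the remaining case $w(Z_v)=3$, the Averkov--Wagner bound~\eqref{eqn:averkov-wagner} rearranges to $\mu(Z_v)\leq\frac{3}{2(v_1+v_2+v_3)}+\frac13$, which is $<\frac12$ as soon as $v_1+v_2+v_3\geq10$.

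Putting these together, the only velocity vectors not yet settled are the pairwise coprime triples with $v_1+v_2+v_3\leq9$, and enumerating them (using $v_1<v_2<v_3$, pairwise coprimality, and the sum bound) leaves exactly the four triples $(1,2,3)$, $(1,2,5)$, $(1,3,4)$, $(1,3,5)$ listed above. For each of these I would apply the algorithm $\covradname{P}$ from Section~\ref{sect:novel-algorithm} — or, as remarked, verify by hand — to the corresponding zonotope, obtaining the values in Table~\ref{tbl:computer-examples-shortlist}: $\mu(Z_v)=\frac12$ for $(1,2,3)$ and $\mu(Z_v)<\frac12$ for the other three. Since all of these satisfy $\mu(Z_v)\leq\frac12$, the Shifted Lonely Runner Conjecture for $d=3$ follows.

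For the extremality claim, I would argue that equality $\mu(Z_v)=\frac12$ holds, among all relevant triples, only for $v=(1,2,3)$: the three triples with $v_1+v_2+v_3\geq10$ and the case $(1,3,5)$ have $w(Z_v)\geq4$ (so strict inequality by the hexagon argument after~\eqref{eqn:planar-flatness}), the triples with $w(Z_v)=3$ and sum $\geq10$ give $\mu(Z_v)<\frac12$ by~\eqref{eqn:averkov-wagner}, and the three small pairwise coprime triples $(1,2,5)$, $(1,3,4)$, $(1,3,5)$ have covering radius strictly below $\frac12$ by the table. Invariance of both $\mu(Z_v)$ and the loneliness property under scaling the velocities and permuting coordinates then yields that $(1,2,3)$ is the unique extremal vector up to these symmetries.

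The main obstacle is not any single step but ensuring the case analysis is exhaustive and the four concrete computations are correct: one must be careful that the flatness-type bounds~\eqref{eqn:planar-flatness} and~\eqref{eqn:averkov-wagner} are applied only under their stated hypotheses (in particular $\mu(Z_v)w(Z_v)>1$ for the latter, which is why the sub-case $\mu(Z_v)w(Z_v)=1$ giving $\mu(Z_v)=\frac13$ must be peeled off separately), and that the enumeration of pairwise coprime triples with bounded sum genuinely produces only the four vectors claimed. The correctness of the algorithmic values relies on \Cref{thm:algfast} (equivalently \Cref{prop:alggood}), so the hard technical work has already been front-loaded into the earlier sections.
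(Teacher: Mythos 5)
Your proposal follows essentially the same route as the paper: reduce to pairwise coprime integer velocities via Proposition~\ref{prop:pairwise-coprime-d3}, rule out $w(Z_v)=2$ by Lemma~\ref{lem:no-lattice-width-two-zonotopes}, dispatch $w(Z_v)\geq4$ via the strict planar flatness bound~\eqref{eqn:planar-flatness} and $w(Z_v)=3$ with $v_1+v_2+v_3\geq10$ via the Averkov--Wagner inequality~\eqref{eqn:averkov-wagner} (after peeling off the $\mu(Z_v)w(Z_v)=1$ sub-case), and finish the four surviving triples by the explicit computation in Table~\ref{tbl:computer-examples-shortlist}. The logic and the case analysis match the paper's proof exactly, including the identification of $(1,2,3)$ as the sole equality case.
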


\begin{remark}
The attempt to reduce the Lonely Runner Problem to a finite list of velocity vectors is not new.
In fact, Tao~\cite{tao2018someremarks} obtained such a reduction for an arbitrary number of runners, in the original setting of \Cref{LRC} with common starting points.
He shows that for a fixed~$d$ it suffices to look at integer velocities $v_1,\ldots,v_d$ with $|v_i| \in d^{\O(d^2)}$, for $1 \leq i \leq d$.
However, this leaves too large a number of cases to check in order to be practically applicable to solving the conjecture, even for a small dimension.
\end{remark}

It is intriguing to try and extend our arguments for Theorem~\ref{thm:sLRC-dim3} towards a reduction for an arbitrary number of runners.
However, the success of the case distinction between the possible values of~$w(Z_v)$, via the planar flatness theorem~\eqref{eqn:planar-flatness} and the Averkov-Wagner bound~\eqref{eqn:averkov-wagner}, seems to be limited to the case $d=3$.
On the one hand, equally precise inequalities between the lattice-width, the covering radius and the volume of (centrally symmetric) polytopes are not available to date.
For instance, the currently best-known bound for the flatness theorem for a convex body~$K \subseteq \R^3$ can be found in~\cite[Thm.~3.2]{dashdobbsgunluknowickiswirszcz2014latticefree} and reads $\mu(K) w(K) \leq 1 + \frac{2}{\sqrt{3}}+(\frac{90}{\pi^2})^{1/3} \approx 4.2439$.
Thus, the required bound $\mu(Z_v) \leq \frac35$ for $d=4$ runners with velocity vector $v \in \Z^4_{>0}$ would be satisfied only if $w(Z_v) \geq 8$.
On the other hand, the associated zonotope~$Z_{\bar v}$ of the velocity vector ${\bar v} = (1,2,\ldots,d)$ is generated by the columns of the following matrix (we only show its non-zero entries):
\[
\left(
\begin{array}{cccccc}
     2 & -1 &       &  &  &  \\
     1 &  1 &     -1 & &  &  \\
\vdots &    & \ddots & \ddots &  & \\
     1 &    &        &      1 &     -1 & \\
     1 &    &  &       &      1 & -1
\end{array}
\right).
\]
The first row of this matrix shows that the lattice-width of~$Z_{\bar v}$ in direction $e_1=(1,0,\ldots,0)^\intercal$ equals~$3$, for any $d \geq 3$.
Thus, already for $d=4$ runners we would need to deal with the cases $w(Z_v) \in \{3,4,5,6,7\}$ separately, requiring substantial new ideas to complete the desired reduction.

\subsection{More computer experiments and Kravitz' Loneliness Spectrum Conjecture}

In order to collect some more computational data on covering radii, we computed this parameter for the zonotopes~$Z_v$ for all triples $v = (v_1,v_2,v_3) \in \Z^3_{>0}$ with coprime entries satisfying $v_1+v_2+v_3 \leq 18$.
The results are summarized in Tables~\ref{tbl:computer-examples-18-pt1}, \ref{tbl:computer-examples-18-pt2} and~\ref{tbl:computer-examples-18-pt3}.

Let $\ML(v_1,\ldots,v_d)$ denote the \emph{maximum loneliness} of the runners with velocities $v_1,\ldots,v_d$, defined as the maximum distance from an integer that all runners can attain simultaneously.
Kravitz' Loneliness Spectrum Conjecture~\cite[Conj.~1.2]{kravitz2019barely} states that for positive coprime integer velocities $v_1,\ldots,v_d$ either
\[
\ML(v_1,\ldots,v_d) = \frac{m}{dm+1},\quad\textrm{for some } m\in\N,\quad \textrm{ or }\quad \ML(v_1,\ldots,v_d) \geq \frac{1}{d}.
\]
Note that this is assuming that all runners start at the same place, that is, $s_1 = \ldots = s_d = 0$.
Translating this conjecture to the shifted setting and the covering radius formulation (cf.~\cite[Sect.~3]{henzemalikiosis}) is equivalent to saying that either
\[
\mu(Z_v) = \frac{(d-2)m+1}{dm+1},\quad\textrm{for some } m\in\N,\quad \textrm{ or }\quad \mu(Z_v) \leq \frac{d-2}{d}.
\]
For the three runner case that we investigated above, this means that
\[
\mu(Z_v) = \frac{m+1}{3m+1},\quad\text{for some }m\in\N,\quad \textrm{ or }\quad \mu(Z_v) \leq \frac{1}{3}.
\]
Inspecting the computed covering radii in Tables~\ref{tbl:computer-examples-18-pt1}, \ref{tbl:computer-examples-18-pt2} and~\ref{tbl:computer-examples-18-pt3}, one finds that this holds true for almost all the values.
The only exceptional covering radii are $\frac{4}{9},\frac{5}{12},\frac{9}{23},\frac{15}{41}$.
Thus, Kravitz' Conjecture does not hold unmodified in the shifted setting.

However, based on this limited data we may extend the Loneliness Spectrum Conjecture (at least for three runners) to the shifted setting as follows:
\[
\mu(Z_v) = \frac{m+1}{3m+j},\ \text{for some }m\in\N\text{ and }j\in\{-1,0,1\},\quad \textrm{ or }\quad \mu(Z_v) \leq \frac{1}{3}.
\]


\subsection*{Acknowledgments}
We thank J\"org M.~Wills and Gennadiy Averkov for thorough reading of an earlier version of the manuscript, and for providing valuable comments and suggestions.
We thank the anonymous referees for very careful reading and for suggestions that improved the quality of the presentation of our material.

\bibliographystyle{amsplain}
\bibliography{mybib}

\appendix

\begin{table}[ht]
\begin{tabular}{|c|c|c||c|c|c|}
\hline
$(v_1,v_2,v_3)$ & generators for $Z_v$ & $\mu(Z_v)$ & $(v_1,v_2,v_3)$ & generators for $Z_v$ & $\mu(Z_v)$ \\ \hline\hline
\phantom{.}$(1,2,3)$ & $\begin{pmatrix} 2 & -1 & 0\\ 1 & 1 & -1 \end{pmatrix}$ & $\frac{1}{2}$ &
\phantom{.}$(1,2,4)$ & $\begin{pmatrix} 2 & -1 & 0\\ 0 & 2 & -1 \end{pmatrix}$ & $\frac{3}{7}$\\ \hline
\phantom{.}$(1,2,5)$ & $\begin{pmatrix} 2 & -1 & 0\\ 1 & 2 & -1 \end{pmatrix}$ & $\frac{3}{7}$ &
\phantom{.}$(1,2,6)$ & $\begin{pmatrix} 2 & -1 & 0\\ 2 & 2 & -1 \end{pmatrix}$ & $\frac{3}{7}$ \\ \hline
\phantom{.}$(1,2,7)$ & $\begin{pmatrix} 2 & -1 & 0\\ 1 & 3 & -1 \end{pmatrix}$ & $\frac{9}{23}$ &
\phantom{.}$(1,2,8)$ & $\begin{pmatrix} 2 & -1 & 0\\ 2 & 3 & -1 \end{pmatrix}$ & $\frac{2}{5}$\\ \hline
\phantom{.}$(1,2,9)$ & $\begin{pmatrix} 2 & -1 & 0\\ 3 & 3 & -1 \end{pmatrix}$ & $\frac{2}{5}$ &
\phantom{.}$(1,2,10)$ & $\begin{pmatrix} 2 & -1 & 0\\ 2 & 4 & -1 \end{pmatrix}$ & $\frac{3}{8}$ \\ \hline
\phantom{.}$(1,2,11)$ & $\begin{pmatrix} 2 & -1 & 0\\ 3 & 4 & -1 \end{pmatrix}$ & $\frac{5}{13}$ &
\phantom{.}$(1,2,12)$ & $\begin{pmatrix} 2 & -1 & 0\\ 4 & 4 & -1 \end{pmatrix}$ & $\frac{5}{13}$\\ \hline
\phantom{.}$(1,2,13)$ & $\begin{pmatrix} 2 & -1 & 0\\ 3 & 5 & -1 \end{pmatrix}$ & $\frac{15}{41}$ &
\phantom{.}$(1,2,14)$ & $\begin{pmatrix} 2 & -1 & 0\\ 4 & 5 & -1 \end{pmatrix}$ & $\frac{3}{8}$\\ \hline
\phantom{.}$(1,2,15)$ & $\begin{pmatrix} 2 & -1 & 0\\ 5 & 5 & -1 \end{pmatrix}$ & $\frac{3}{8}$ &
\phantom{.}$(1,3,4)$ & $\begin{pmatrix} 3 & -1 & 0\\ 1 & 1 & -1 \end{pmatrix}$ & $\frac{3}{7}$ \\ \hline
\phantom{.}$(1,3,5)$ & $\begin{pmatrix} 3 & -1 & 0\\ 2 & 1 & -1 \end{pmatrix}$ & $\frac{3}{8}$ &
\phantom{.}$(1,3,6)$ & $\begin{pmatrix} 3 & -1 & 0\\ 0 & 2 & -1 \end{pmatrix}$ & $\frac{1}{3}$ \\ \hline
\phantom{.}$(1,3,7)$ & $\begin{pmatrix} 3 & -1 & 0\\ 1 & 2 & -1 \end{pmatrix}$ & $\frac{3}{8}$ &
\phantom{.}$(1,3,8)$ & $\begin{pmatrix} 3 & -1 & 0\\ 2 & 2 & -1 \end{pmatrix}$ & $\frac{1}{3}$ \\ \hline
\phantom{.}$(1,3,9)$ & $\begin{pmatrix} 3 & -1 & 0\\ 0 & 3 & -1 \end{pmatrix}$ & $\frac{4}{13}$ &
\phantom{.}$(1,3,10)$ & $\begin{pmatrix} 3 & -1 & 0\\ 1 & 3 & -1 \end{pmatrix}$ & $\frac{4}{13}$ \\ \hline
\phantom{.}$(1,3,11)$ & $\begin{pmatrix} 3 & -1 & 0\\ 2 & 3 & -1 \end{pmatrix}$ & $\frac{1}{3}$ &
\phantom{.}$(1,3,12)$ & $\begin{pmatrix} 3 & -1 & 0\\ 3 & 3 & -1 \end{pmatrix}$ & $\frac{4}{13}$\\ \hline
\phantom{.}$(1,3,13)$ & $\begin{pmatrix} 3 & -1 & 0\\ 1 & 4 & -1 \end{pmatrix}$ & $\frac{16}{55}$ &
\phantom{.}$(1,3,14)$ & $\begin{pmatrix} 3 & -1 & 0\\ 2 & 4 & -1 \end{pmatrix}$ & $\frac{5}{17}$ \\ \hline
\end{tabular}
\caption{The covering radius of the hexagons corresponding to the triples of velocities which sum to at most $18$ - Part I.}
\label{tbl:computer-examples-18-pt1}
\end{table}

\begin{table}[ht]
\begin{tabular}{|c|c|c||c|c|c|}
\hline
$(v_1,v_2,v_3)$ & generators for $Z_v$ & $\mu(Z_v)$ & $(v_1,v_2,v_3)$ & generators for $Z_v$ & $\mu(Z_v)$ \\ \hline\hline
\phantom{.}$(1,4,5)$ & $\begin{pmatrix} 4 & -1 & 0\\ 1 & 1 & -1 \end{pmatrix}$ & $\frac{4}{9}$ &
\phantom{.}$(1,4,6)$ & $\begin{pmatrix} 4 & -1 & 0\\ 2 & 1 & -1 \end{pmatrix}$ & $\frac{2}{5}$ \\ \hline
\phantom{.}$(1,4,7)$ & $\begin{pmatrix} 4 & -1 & 0\\ -1 & 2 & -1 \end{pmatrix}$ & $\frac{4}{11}$ &
\phantom{.}$(1,4,8)$ & $\begin{pmatrix} 4 & -1 & 0\\ 0 & 2 & -1 \end{pmatrix}$ & $\frac{1}{3}$ \\ \hline
\phantom{.}$(1,4,9)$ & $\begin{pmatrix} 4 & -1 & 0\\ 1 & 2 & -1 \end{pmatrix}$ & $\frac{4}{13}$ &
\phantom{.}$(1,4,10)$ & $\begin{pmatrix} 4 & -1 & 0\\ 2 & 2 & -1 \end{pmatrix}$ & $\frac{2}{7}$ \\ \hline
\phantom{.}$(1,4,11)$ & $\begin{pmatrix} 4 & -1 & 0\\ -1 & 3 & -1 \end{pmatrix}$ & $\frac{4}{15}$ &
\phantom{.}$(1,4,12)$ & $\begin{pmatrix} 4 & -1 & 0\\ 0 & 3 & -1 \end{pmatrix}$ & $\frac{1}{4}$ \\ \hline
\phantom{.}$(1,4,13)$ & $\begin{pmatrix} 4 & -1 & 0\\ 1 & 3 & -1 \end{pmatrix}$ & $\frac{2}{7}$ &
\phantom{.}$(1,5,6)$ & $\begin{pmatrix} 5 & -1 & 0\\ 1 & 1 & -1 \end{pmatrix}$ & $\frac{3}{7}$ \\ \hline
\phantom{.}$(1,5,7)$ & $\begin{pmatrix} 5 & -1 & 0\\ 2 & 1 & -1 \end{pmatrix}$ & $\frac{3}{8}$ &
\phantom{.}$(1,5,8)$ & $\begin{pmatrix} 5 & -1 & 0\\ 3 & 1 & -1 \end{pmatrix}$ & $\frac{18}{53}$ \\ \hline
\phantom{.}$(1,5,9)$ & $\begin{pmatrix} 5 & -1 & 0\\ -1 & 2 & -1 \end{pmatrix}$ & $\frac{1}{3}$ &
\phantom{.}$(1,5,10)$ & $\begin{pmatrix} 5 & -1 & 0\\ 0 & 2 & -1 \end{pmatrix}$ & $\frac{1}{3}$\\ \hline
\phantom{.}$(1,5,11)$ & $\begin{pmatrix} 5 & -1 & 0\\ 1 & 2 & -1 \end{pmatrix}$ & $\frac{5}{16}$ &
\phantom{.}$(1,5,12)$ & $\begin{pmatrix} 5 & -1 & 0\\ 2 & 2 & -1 \end{pmatrix}$ & $\frac{5}{17}$ \\ \hline
\phantom{.}$(1,6,7)$ & $\begin{pmatrix} 6 & -1 & 0\\ 1 & 1 & -1 \end{pmatrix}$ & $\frac{5}{13}$ &
\phantom{.}$(1,6,8)$ & $\begin{pmatrix} 6 & -1 & 0\\ 2 & 1 & -1 \end{pmatrix}$ & $\frac{1}{3}$ \\ \hline
\phantom{.}$(1,6,9)$ & $\begin{pmatrix} 6 & -1 & 0\\ 3 & 1 & -1 \end{pmatrix}$ & $\frac{7}{23}$ &
\phantom{.}$(1,6,10)$ & $\begin{pmatrix} 6 & -1 & 0\\ -2 & 2 & -1 \end{pmatrix}$ & $\frac{11}{38}$ \\ \hline
\phantom{.}$(1,6,11)$ & $\begin{pmatrix} 6 & -1 & 0\\ -1 & 2 & -1 \end{pmatrix}$ & $\frac{5}{17}$ &
\phantom{.}$(1,7,8)$ & $\begin{pmatrix} 7 & -1 & 0\\ 1 & 1 & -1 \end{pmatrix}$ & $\frac{2}{5}$ \\ \hline
\phantom{.}$(1,7,9)$ & $\begin{pmatrix} 7 & -1 & 0\\ 2 & 1 & -1 \end{pmatrix}$ & $\frac{5}{16}$ &
\phantom{.}$(1,7,10)$ & $\begin{pmatrix} 7 & -1 & 0\\ 3 & 1 & -1 \end{pmatrix}$ & $\frac{8}{29}$ \\ \hline
\phantom{.}$(1,8,9)$ & $\begin{pmatrix} 8 & -1 & 0\\ 1 & 1 & -1 \end{pmatrix}$ & $\frac{2}{5}$ & & &\\ \hline
\end{tabular}
\caption{The covering radius of the hexagons corresponding to the triples of velocities which sum to at most $18$ - Part II.}
\label{tbl:computer-examples-18-pt2}
\end{table}

\begin{table}[ht]
\begin{tabular}{|c|c|c||c|c|c|}
\hline
$(v_1,v_2,v_3)$ & generators for $Z_v$ & $\mu(Z_v)$ & $(v_1,v_2,v_3)$ & generators for $Z_v$ & $\mu(Z_v)$ \\ \hline\hline
\phantom{.}$(2,3,4)$ & $\begin{pmatrix} 3 & -2 & 0\\ 2 & 0 & -1 \end{pmatrix}$ & $\frac{2}{5}$ & 
\phantom{.}$(2,3,5)$ & $\begin{pmatrix} 3 & -2 & 0\\ 1 & 1 & -1 \end{pmatrix}$ & $\frac{3}{7}$ \\ \hline
\phantom{.}$(2,3,6)$ & $\begin{pmatrix} 3 & -2 & 0\\ 0 & 2 & -1 \end{pmatrix}$ & $\frac{1}{3}$ &
\phantom{.}$(2,3,7)$ & $\begin{pmatrix} 3 & -2 & 0\\ 2 & 1 & -1 \end{pmatrix}$ & $\frac{1}{3}$ \\ \hline
\phantom{.}$(2,3,8)$ & $\begin{pmatrix} 3 & -2 & 0\\ 2 & 2 & -1 \end{pmatrix}$ & $\frac{4}{11}$ &
\phantom{.}$(2,3,9)$ & $\begin{pmatrix} 3 & -2 & 0\\ 0 & 3 & -1 \end{pmatrix}$ & $\frac{5}{17}$ \\ \hline
\phantom{.}$(2,3,10)$ & $\begin{pmatrix} 3 & -2 & 0\\ 4 & 2 & -1 \end{pmatrix}$ & $\frac{1}{3}$ &
\phantom{.}$(2,3,11)$ & $\begin{pmatrix} 3 & -2 & 0\\ 1 & 3 & -1 \end{pmatrix}$ & $\frac{2}{7}$ \\ \hline
\phantom{.}$(2,3,12)$ & $\begin{pmatrix} 3 & -2 & 0\\ 3 & 3 & -1 \end{pmatrix}$ & $\frac{2}{7}$ &
\phantom{.}$(2,3,13)$ & $\begin{pmatrix} 3 & -2 & 0\\ 2 & 3 & -1 \end{pmatrix}$ & $\frac{5}{16}$ \\ \hline
\phantom{.}$(2,4,5)$ & $\begin{pmatrix} 2 & -1 & 0\\ 1 & 2 & -2 \end{pmatrix}$ & $\frac{7}{19}$ &
\phantom{.}$(2,4,7)$ & $\begin{pmatrix} 2 & -1 & 0\\ 1 & 3 & -2 \end{pmatrix}$ & $\frac{9}{25}$ \\ \hline
\phantom{.}$(2,4,9)$ & $\begin{pmatrix} 2 & -1 & 0\\ 3 & 3 & -2 \end{pmatrix}$ & $\frac{4}{11}$ &
\phantom{.}$(2,4,11)$ & $\begin{pmatrix} 2 & -1 & 0\\ 3 & 4 & -2 \end{pmatrix}$ & $\frac{13}{37}$ \\ \hline
\phantom{.}$(2,5,6)$ & $\begin{pmatrix} 5 & -2 & 0\\ 3 & 0 & -1 \end{pmatrix}$ & $\frac{4}{13}$ &
\phantom{.}$(2,5,7)$ & $\begin{pmatrix} 5 & -2 & 0\\ 1 & 1 & -1 \end{pmatrix}$ & $\frac{5}{12}$ \\ \hline
\phantom{.}$(2,5,8)$ & $\begin{pmatrix} 5 & -2 & 0\\ -1 & 2 & -1 \end{pmatrix}$ & $\frac{4}{13}$ &
\phantom{.}$(2,5,9)$ & $\begin{pmatrix} 5 & -2 & 0\\ 2 & 1 & -1 \end{pmatrix}$ & $\frac{5}{14}$ \\ \hline
\phantom{.}$(2,5,10)$ & $\begin{pmatrix} 5 & -2 & 0\\ 0 & 2 & -1 \end{pmatrix}$ & $\frac{1}{3}$ &
\phantom{.}$(2,5,11)$ & $\begin{pmatrix} 5 & -2 & 0\\ 3 & 1 & -1 \end{pmatrix}$ & $\frac{4}{13}$ \\ \hline
\phantom{.}$(2,6,7)$ & $\begin{pmatrix} 3 & -1 & 0\\ 1 & 2 & -2 \end{pmatrix}$ & $\frac{4}{13}$ &
\phantom{.}$(2,6,9)$ & $\begin{pmatrix} 3 & -1 & 0\\ 0 & 3 & -2 \end{pmatrix}$ & $\frac{2}{7}$ \\ \hline
\phantom{.}$(2,7,8)$ & $\begin{pmatrix} 7 & -2 & 0\\ 4 & 0 & -1 \end{pmatrix}$ & $\frac{4}{15}$ &
\phantom{.}$(2,7,9)$ & $\begin{pmatrix} 7 & -2 & 0\\ 1 & 1 & -1 \end{pmatrix}$ & $\frac{3}{8}$ \\ \hline
\phantom{.}$(3,4,5)$ & $\begin{pmatrix} 4 & -3 & 0\\ -1 & 2 & -1 \end{pmatrix}$ & $\frac{1}{3}$ &
\phantom{.}$(3,4,6)$ & $\begin{pmatrix} 4 & -3 & 0\\ 2 & 0 & -1 \end{pmatrix}$ & $\frac{1}{3}$\\ \hline
\phantom{.}$(3,4,7)$ & $\begin{pmatrix} 4 & -3 & 0\\ 1 & 1 & -1 \end{pmatrix}$ & $\frac{2}{5}$ &
\phantom{.}$(3,4,8)$ & $\begin{pmatrix} 4 & -3 & 0\\ 0 & 2 & -1 \end{pmatrix}$ & $\frac{1}{3}$ \\ \hline
\phantom{.}$(3,4,9)$ & $\begin{pmatrix} 4 & -3 & 0\\ 3 & 0 & -1 \end{pmatrix}$ & $\frac{2}{7}$ &
\phantom{.}$(3,4,10)$ & $\begin{pmatrix} 4 & -3 & 0\\ 2 & 1 & -1 \end{pmatrix}$ & $\frac{4}{13}$ \\ \hline
\phantom{.}$(3,4,11)$ & $\begin{pmatrix} 4 & -3 & 0\\ 1 & 2 & -1 \end{pmatrix}$ & $\frac{1}{3}$ &
\phantom{.}$(3,5,6)$ & $\begin{pmatrix} 5 & -3 & 0\\ 2 & 0 & -1 \end{pmatrix}$ & $\frac{1}{3}$ \\ \hline
\phantom{.}$(3,5,7)$ & $\begin{pmatrix} 5 & -3 & 0\\ -1 & 2 & -1 \end{pmatrix}$ & $\frac{1}{3}$ &
\phantom{.}$(3,5,8)$ & $\begin{pmatrix} 5 & -3 & 0\\ 1 & 1 & -1 \end{pmatrix}$ & $\frac{5}{13}$ \\ \hline
\phantom{.}$(3,5,9)$ & $\begin{pmatrix} 5 & -3 & 0\\ 3 & 0 & -1 \end{pmatrix}$ & $\frac{8}{29}$ &
\phantom{.}$(3,5,10)$ & $\begin{pmatrix} 5 & -3 & 0\\ 0 & 2 & -1 \end{pmatrix}$ & $\frac{1}{3}$ \\ \hline
\phantom{.}$(3,6,7)$ & $\begin{pmatrix} 2 & -1 & 0\\ 1 & 3 & -3 \end{pmatrix}$ & $\frac{1}{3}$ &
\phantom{.}$(3,6,8)$ & $\begin{pmatrix} 2 & -1 & 0\\ 2 & 3 & -3 \end{pmatrix}$ & $\frac{1}{3}$ \\ \hline
\phantom{.}$(3,7,8)$ & $\begin{pmatrix} 1 & 3 & -3\\ 0 & 8 & -7 \end{pmatrix}$ & $\frac{3}{10}$ &
\phantom{.}$(4,5,6)$ & $\begin{pmatrix} 5 & -4 & 0\\ -1 & 2 & -1 \end{pmatrix}$ & $\frac{1}{3}$ \\ \hline
\phantom{.}$(4,5,7)$ & $\begin{pmatrix} 5 & -4 & 0\\ 3 & -1 & -1 \end{pmatrix}$ & $\frac{3}{11}$ &
\phantom{.}$(4,5,8)$ & $\begin{pmatrix} 5 & -4 & 0\\ 2 & 0 & -1 \end{pmatrix}$ & $\frac{1}{3}$ \\ \hline
\phantom{.}$(4,5,9)$ & $\begin{pmatrix} 5 & -4 & 0\\ 1 & 1 & -1 \end{pmatrix}$ & $\frac{5}{13}$ &
\phantom{.}$(4,6,7)$ & $\begin{pmatrix} 3 & -2 & 0\\ 2 & 1 & -2 \end{pmatrix}$ & $\frac{13}{47}$ \\ \hline
\phantom{.}$(5,6,7)$ & $\begin{pmatrix} 6 & -5 & 0\\ -1 & 2 & -1 \end{pmatrix}$ & $\frac{4}{13}$ & & & \\ \hline

\end{tabular}
\caption{The covering radius of the hexagons corresponding to the triples of velocities which sum to at most $18$ - Part III.}
\label{tbl:computer-examples-18-pt3}
\end{table}
  
\end{document}